\theoremstyle{plain}
\newtheorem{theorem}{Theorem}
\newtheorem{lemma} [theorem] {Lemma}
\newtheorem{remark}[theorem]{Remark}
\newtheorem{definition}[theorem]{Definition}
\def\diam{\hskip0.02cm{\rm diam}\hskip0.01cm}
\newcommand{\WOT}{{\rm WOT}}
\newtheorem{corollary}[theorem]{Corollary}
\newtheorem{proposition}[theorem]{Proposition}
\numberwithin{theorem}{section} \numberwithin{equation}{section}
\newcommand{\E}{\mathcal{E}}
\newcommand{\X}{\mathcal{X}}
\newcommand{\T}{\mathcal{T}}
\newcommand{\cH}{\mathcal{H}}
\newcommand{\M}{\mathcal{M}}
\newcommand{\s}{\subset}
\newcommand{\B}{\mathcal{B}}
\begin{document}

\title{Fixed points of holomorphic
transformations of operator balls}

\author{M.\,I.~Ostrovskii\\
Department of Mathematics and Computer Science\\
St. John's University\\
8000 Utopia Parkway\\
Queens, NY 11439\\
USA\\
e-mail: {\tt ostrovsm@stjohns.edu} \and\\
V.\,S.~Shulman\\
Department of Mathematics\\
Vologda State Technical University\\
15 Lenina street\\
Vologda 160000\\
RUSSIA\\
e-mail: {\tt shulman\_v@yahoo.com}
\and\\ L.~Turowska\\
Department of Mathematical Sciences\\
Chalmers University of Technology and University of Gothenburg\\
SE-41296, Gothenburg\\SWEDEN\\
e-mail: {\tt turowska@chalmers.se}}

\maketitle

\newpage

\tableofcontents

\noindent{\bf Abstract.} A new technique for proving fixed point theorems for families of holomorphic
transformations of operator balls is developed. One of these theorems is used to show that a bounded
representation in a real or complex Hilbert space is orthogonalizable or unitarizable (that is similar to an
orthogonal or unitary representation), respectively, provided the representation has an invariant indefinite
quadratic form with finitely many negative squares.
\bigskip

\noindent{\bf Keywords.} Hilbert space; bounded representation;
unitary representation; orthogonal representation; fixed point;
normal structure; biholomorphic transformation; indefinite
quad\-ra\-tic form.
\bigskip

\noindent{\bf 2000 Mathematics Subject Classification:} 47H10;
22D10; 46G20; 46T25; 47B50; 54E35

\section{Introduction}\label{S:Intro}

The main purpose of this paper is to prove the existence of fixed
points for some collections of holomorphic maps of the unit ball
$\B$ in the space $L(K,H)$ of operators from a finite-dimensional
Hilbert space $K$ to a Hilbert space $H$.  More precisely we
consider 1) groups of biholomorphic maps, and 2) finite
commutative families of holomorphic maps, and prove that in both
cases the existence of a fixed point is equivalent to the
existence of an invariant set separated from the boundary of $\B$.
For the first case, this result was obtained in \cite{OST}; the
advantage of the present approach is that we do not use the deep
theory of Shafrir \cite{Sha} replacing it by some simple general
observations from metric geometry.

The standard tool in the study of holomorphic maps of $\B$ is the
Carath\'eodory metric $\rho = c_{\B}$: it is known that
holomorphic maps do not increase $c_{\B}$. A similar metric $\rho$
can be defined in the case where Hilbert spaces $K$ and $H$ are
over reals. We treat both cases simultaneously dealing with
$\rho$-non-expansive maps. This allows us to obtain applications
to invariant subspaces and the orthogonalization problem in real
Hilbert spaces similar to those that were obtained in \cite{OST}
for the complex case. Namely it is proved that a bounded group of
$J$-unitary operators in a real Pontryagin space has a maximal
negative invariant subspace and that a bounded representation of a
group in a real Hilbert space that preserves a sesquilinear form
with a finite number of negative squares is similar to an
orthogonal representation. We also remove the assumption of
separability of $H$ made in \cite{OST}.

Our main technical tool is the proof of a kind of normal structure
in $\B$ - the existence of non-diametral points for a sufficiently
rich class of sets. In more detail: first we establish that each
pair $X,Y\in \B$ has at least one {\it midpoint}, that is, a point
$Z\in\B$ satisfying $\rho(Z,W)\le 1/2(\rho(X,W)+\rho(Y,W))$ for
all $W\in \B$. We call a subset of $\B$ {\it mid-convex} if with
any two points it contains all of their midpoints. \WOT-compact
mid-convex subsets of $\B$ are called {\it m-sets}. Then we prove
that each $m$-set either is a singleton or has a non-diametral
point (Theorem \ref{T:non-diam}).

The next step is to show that the presence of a $\rho$-bounded orbit implies the existence of an invariant
$m$-set. After that we use \WOT-compactness and get the existence of a minimal invariant $m$-set. Then we apply
Theorem \ref{T:non-diam} to prove that such minimal set should be a singleton.

The described general scheme for proving fixed point theorems for
nonexpansive mappings goes back to \cite{BM48}. This scheme was
gradually developed by many different authors, and a large variety
of realizations of it was found in further work on fixed points of
nonexpansive mappings, see \cite{GR84}, \cite{Kir65},
\cite{Kir81}, \cite{R75}, \cite{R76}, numerous examples of
applications of this scheme can be found in \cite{KS01}.

Note that another instance of the connection between boundedness
of orbits and the existence of fixed points can be found in
\cite{R75}.

\section{Metric geometry: the midpoint property, fixed
po\-ints of groups isometries, and other tools}\label{S:metric}

Let $(\X,d)$ be a metric space. By a {\it ball} in $(\X,d)$ we
mean (unless it is explicitly stated otherwise) a closed ball
$E_{a,r} = \{x\in X: d(a,x)\le r\}$. We say that $(\X,d)$ is {\it
ball-compact} if a family of balls has non-void intersection
provided  each its finite subfamily has non-void intersection (see
\cite{Tak2}). It is easy to show that each ball-compact metric
space is complete.

A subset $M\subset \X$ is called {\it ball-convex} if it is an
intersection of a family of balls.  It is clear from the
definition that each ball-convex set is bounded and closed. The
compactness extends from balls to all ball-convex sets:

\begin{lemma}[\cite{Tak2}]\label{fip-conv}
Let $(\X,d)$ be ball-compact. A family $\{M_{\lambda}: \lambda\in \Lambda\}$ of ball-convex subsets of $\X$ has
non-empty intersection if each its finite subfamily has non-empty intersection.
\end{lemma}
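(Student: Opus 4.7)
The plan is to reduce the statement about ball-convex sets to the ball-compactness hypothesis by unpacking the definition of ball-convexity. Since each $M_\lambda$ is, by hypothesis, an intersection of balls, write
\[
M_\lambda \;=\; \bigcap_{\alpha \in A_\lambda} B_\alpha^{(\lambda)},
\]
where each $B_\alpha^{(\lambda)}$ is a closed ball in $(\X,d)$. I would then consider the enlarged family of \emph{balls}
\[
\mathcal{F} \;=\; \bigl\{ B_\alpha^{(\lambda)} : \lambda\in\Lambda,\ \alpha\in A_\lambda \bigr\}
\]
and aim to verify that $\mathcal{F}$ has the finite intersection property; once this is established, ball-compactness yields a point $x \in \bigcap \mathcal{F}$, and since $\bigcap_{\alpha\in A_\lambda} B_\alpha^{(\lambda)} = M_\lambda$ for each $\lambda$, such an $x$ lies in $\bigcap_{\lambda\in\Lambda} M_\lambda$, proving the lemma.

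The key step is the FIP check for $\mathcal{F}$. Take finitely many balls $B_{\alpha_1}^{(\lambda_1)}, \ldots, B_{\alpha_n}^{(\lambda_n)}$ from $\mathcal{F}$ (the indices $\lambda_i$ need not be distinct). By the hypothesis on the $M_\lambda$'s, the finite intersection $M_{\lambda_1} \cap \cdots \cap M_{\lambda_n}$ is non-empty. Now the containment $M_{\lambda_i} \subseteq B_{\alpha_i}^{(\lambda_i)}$ holds trivially, because $B_{\alpha_i}^{(\lambda_i)}$ is one of the balls whose intersection defines $M_{\lambda_i}$. Hence any point of $M_{\lambda_1}\cap\cdots\cap M_{\lambda_n}$ lies in $B_{\alpha_1}^{(\lambda_1)}\cap\cdots\cap B_{\alpha_n}^{(\lambda_n)}$, which is therefore non-empty. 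This gives FIP for $\mathcal{F}$, and the lemma follows.

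There is no real obstacle here: the argument is purely definitional and uses the ball-compactness hypothesis exactly once, at the very end, to pass from the FIP of the ball family $\mathcal{F}$ to a common point. The only thing to be slightly careful about is that the definition of ball-convexity allows the index set $A_\lambda$ to be infinite, but this causes no difficulty because we never need to intersect infinitely many balls in a single finitary argument; we only invoke ball-compactness on $\mathcal{F}$ as a whole, where infiniteness is precisely what ball-compactness is designed to handle.
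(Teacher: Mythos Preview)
Your proof is correct and follows essentially the same approach as the paper: write each $M_\lambda$ as an intersection of balls, pool all these balls into a single family, observe that this family has the finite intersection property because any finite subfamily contains the intersection of the corresponding $M_\lambda$'s, and then invoke ball-compactness. The paper's proof is terser but the argument is identical.
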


\begin{proof}
For each $\lambda\in \Lambda$, we have $M_{\lambda} = \cap_{i\in I_{\lambda}}E_i^{\lambda}$ where all
$E_i^{\lambda}$ are balls. Let us consider the family of balls $\mathcal{U} = \{E_i^{\lambda}: \lambda\in
\Lambda, i\in I_{\lambda}\}$. Each finite subfamily of $\mathcal{U}$ has a non-void intersection because it
contains the intersection of corresponding $M_{\lambda}$. By ball-compactness, $\mathcal{U}$ has non-zero
intersection which clearly coincides with $\cap_{\lambda}M_{\lambda}$.
\end{proof}

The {\it diameter} of a subset $M\s \X$ is defined by
\begin{equation}\label{diam} \diam M = \sup\{d(x,y): x,y\in M\}. \end{equation}
A point $a$ in a bounded subset $M$ is called {\it diametral} if
$$\sup\{d(a,x):x\in M\} = \diam M.$$
A metric space $\X$ is said to have {\it normal structure} if
every ball-convex subset of $\X$ with more than one element has a
non-diametral point. A mapping $g:\X\to \X$ is called {\it
nonexpansive} if $d(g(x),g(y))\le d(x,y)$ for all $x,y\in\X$.
\medskip

The concept of normal structure, introduced by Brodskii and Milman
\cite{BM48} for Banach spaces, has  played a prominent role in
fixed point theory for nonexpansive mappings. See, for example,
\cite{GR84}, \cite{Kir65}, \cite{KS01}, \cite{R76}, \cite{R80}. In
particular the scheme of the proof of the following result is well
known; nevertheless the result itself seems to be new.

\begin{theorem}\label{fp-gen}
Suppose that a metric space $(\X,d)$ is ball-compact and has normal structure. If a group of isometries of
$(\X,d)$ has a bounded orbit, then it has a fixed point.
\end{theorem}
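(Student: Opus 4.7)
The strategy is the classical Brodskii--Milman scheme: produce a minimal nonempty $G$-invariant ball-convex subset of $\X$ and use normal structure to force it to be a singleton.

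First I would exhibit at least one such set. Fix $x_0 \in \X$ whose orbit $Gx_0$ is bounded, and let $M_0$ be the intersection of all balls of $\X$ that contain $Gx_0$. Then $M_0$ is ball-convex by construction, contains $Gx_0$, and is bounded. Since each $g \in G$ is a surjective isometry, it sends balls to balls of the same radius, and because $g(Gx_0) = Gx_0$ it permutes the family of balls containing $Gx_0$; hence $g(M_0) = M_0$. Thus the collection $\F$ of nonempty bounded ball-convex $G$-invariant subsets of $\X$ is nonempty.

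Next, I would order $\F$ by reverse inclusion and apply Zorn's lemma. For a chain $\{M_\lambda\}$ in $\F$, the intersection $\bigcap_\lambda M_\lambda$ is ball-convex (an intersection of intersections of balls) and $G$-invariant; it is nonempty by Lemma \ref{fip-conv}, since any finite subcollection of the chain meets in its smallest member. Hence every chain has a lower bound in $\F$, so $\F$ contains a minimal element $M$.

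Suppose for contradiction that $M$ has at least two points. By normal structure, $M$ admits a non-diametral point $z_0$, i.e.\ some $r < \diam M$ with $d(z_0,x) \le r$ for every $x \in M$. Consider the Chebyshev-type subset
\[
C \;=\; \{z \in M : d(z,x) \le r \text{ for all } x \in M\} \;=\; M \cap \bigcap_{x \in M} E_{x,r}.
\]
Then $C$ is ball-convex (an intersection of ball-convex sets), contains $z_0$, and is a proper subset of $M$ because $\diam M > r$ forces some $x_1 \in M$ with $d(x_1,x_2) > r$ for some $x_2 \in M$, whence $x_1 \notin C$. It is also $G$-invariant: for $z \in C$, $g \in G$, and $x \in M$, one has $d(gz,x) = d(z, g^{-1}x) \le r$ because $g^{-1}x \in M$. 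This contradicts the minimality of $M$, so $M$ must be a singleton $\{p\}$, and then $gp = p$ for all $g \in G$.

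The delicate step is the Zorn argument, where ball-compactness enters through Lemma \ref{fip-conv} to guarantee that a descending chain in $\F$ has nonempty intersection; everything else is bookkeeping with the facts that isometries carry balls to balls of the same radius and preserve $M$.
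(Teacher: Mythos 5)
Your proof is correct and follows essentially the same Brodskii--Milman scheme as the paper: a minimal nonempty $G$-invariant ball-convex set obtained via Lemma \ref{fip-conv} and Zorn's lemma, then normal structure and the Chebyshev-type set $\bigcap_{x\in M}E_{x,r}$ to contradict minimality. If anything, your version is slightly cleaner, since you explicitly intersect that set with $M$ before invoking minimality.
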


\begin{proof}
Let $G$ be a group of isometries of $(\X,d)$ and let $G(x)$ be a bounded orbit, where $x$ is some point in $\X$.
Then the family $\Phi$ of all balls containing $G(x)$ is non-empty. Since $G(x)$ is invariant under $G$, the
family $\Phi$ is also invariant: $g(E)\in \Phi$, for each $E\in \Phi$. Hence the intersection $M_1$ of all
elements of $\Phi$ is a non-void $G$-invariant ball-convex set.

Thus the family $\M$ of all non-void $G$-invariant ball-convex subsets of $\X$ is non-empty. It follows from
Lemma \ref{fip-conv} that the intersection of a decreasing chain  of sets in $\M$ belongs to $\M$. By Zorn
Lemma, $\M$ has minimal elements. Our aim is to prove that a minimal element $M$ of $\M$ consists of one point.

Assume the contrary and let $\diam M=\alpha>0$. Since $(\X,d)$ has normal structure, $M$ contains a non-diametral
point $a$. It follows that $M\subset\{x\in \X:~d(a,x)\le\delta\}$ for some $\delta<\alpha$. Set
$$O=\bigcap_{b\in M}E_{b,\delta}.$$
The set $O$ is non-empty because $a\in O$. Furthermore $O$ is ball-convex by definition. To see that $O$ is a
proper subset of $M$  take $b,c\in M$ with $d(b,c)
> \delta$, then $c\notin E_{b,\delta}$, hence $c\notin O$.

Since $G$ is a group of isometric transformations and $M$ is invariant under each element of $G$, the action of
$G$ on $M$ is by isometric bijections. Therefore $O$ is $G$-invariant. We get a contradiction with the
minimality of $M$.
\end{proof}

The following definition is essential for our results on normal
structure and non-diametral points.

\begin{definition} {\rm A metric space $(\X,d)$ is said to have
{\it the midpoint property} if for any two points $a,b\in \X$ there is $c\in \X$ such that
$$d(c,x)\le (d(a,x) +
d(b,x))/2 ~~~\forall ~x\in (\X,d).$$ Each point $c$ satisfying this condition is called a {\it midpoint} for
$(a,b)$, the set of all midpoints for $(a,b)$ is denoted by $m(a,b)$.}
\end{definition}

\begin{remark} {\rm A straightforward adaptation of the well-known argument of Menger \cite{Men28} (see
also \cite[Theorem 14.1]{Blu53}) shows that complete metric spaces
with the midpoint property have the {\it convex structure} defined
by Takahashi \cite{Tak1}, but we do not need this fact.}
\end{remark}

Our proof of the fact that operator balls have the midpoint
property uses the following definition and lemma. We say that an
isometric involution $\sigma$ of $\X$ is {\it a reflection in a
point $x_0\in \X$} if $d(x,\sigma(x)) = 2 d(x,x_0)$ for each $x\in
\X$.

\begin{lemma}\label{symmetry}
If there exists a reflection $\sigma$ in a point $x_0\in \X$, then
$x_0$ is a midpoint for each pair $(x,\sigma(x))$.
\end{lemma}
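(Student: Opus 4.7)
The plan is to verify directly the midpoint inequality
$$d(x_0, y) \le \tfrac{1}{2}\bigl(d(x,y) + d(\sigma(x), y)\bigr) \quad \text{for all } y \in \X,$$
using only two facts about the reflection $\sigma$: that it is an isometric involution and that $d(z, \sigma(z)) = 2d(z, x_0)$ for every $z \in \X$.

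First I would record the immediate consequence that $x_0$ is fixed by $\sigma$: applying the reflection identity at $z = x_0$ gives $d(x_0, \sigma(x_0)) = 0$. Next, since $\sigma$ is an isometric involution, $\sigma^{-1} = \sigma$, so for any $x, y$ we have the symmetry
$$d(\sigma(x), y) = d(\sigma(\sigma(x)), \sigma(y)) = d(x, \sigma(y)).$$
This is the key algebraic move that lets us convert a bound involving $\sigma(x)$ into a bound involving $\sigma(y)$.

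The main step is then a single application of the triangle inequality to the triple $(y, x, \sigma(y))$:
$$d(y, \sigma(y)) \le d(y, x) + d(x, \sigma(y)).$$
By the reflection identity applied at $z = y$, the left-hand side equals $2 d(y, x_0)$, and by the symmetry above the right-hand side equals $d(x, y) + d(\sigma(x), y)$. Dividing by $2$ yields exactly the midpoint inequality, so $x_0 \in m(x, \sigma(x))$.

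There is no genuine obstacle here; the only thing to be careful about is deploying the isometry of $\sigma$ on the correct pair so that the reflection identity can be applied at $y$ rather than at $x$ — that is the whole content of the argument.
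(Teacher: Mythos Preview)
Your proof is correct and is essentially identical to the paper's: both apply the reflection identity at $y$ to get $2d(y,x_0)=d(y,\sigma(y))$, bound this via the triangle inequality through $x$, and then use that $\sigma$ is an isometric involution to rewrite $d(x,\sigma(y))=d(\sigma(x),y)$. The only difference is that you explicitly record $\sigma(x_0)=x_0$, which is true but not actually used in the argument.
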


\begin{proof}
Let $x\in \X$. For each $y\in \X$, $d(y,x_0) = \frac{1}{2}
d(y,\sigma(y)) \le \frac{1}{2}(d(y,x) + d(\sigma(y),x))) =
\frac{1}{2}(d(y,x)+d(y,\sigma(x)))$. This is what we need.
\end{proof}

Our next result is the main technical tool for proving results on
normal structure for spaces with the midpoint property.

\begin{lemma}\label{perif}
Suppose that a metric space $(\X,d)$ has the midpoint property.
Let $M$ be a ball-convex subset of $\X$ and $\alpha$ be the
diameter of $M$. If all points of $M$ are diametral, then $M$
contains a net $\{c_{\lambda}: \lambda\in \Lambda \}$ with the
property: $\lim_{\lambda} d(c_{\lambda},x) = \alpha$ for each
$x\in M$.
\end{lemma}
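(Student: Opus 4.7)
The plan is to reduce the existence of the net to a finitary statement: for every finite $F \subset M$ and every $\eta > 0$, there is $c \in M$ with $d(c, x) \geq \alpha - \eta$ for all $x \in F$. Granting this, I index the net by pairs $(F, \eta)$ directed by $(F,\eta) \leq (F',\eta')$ iff $F \subseteq F'$ and $\eta \geq \eta'$, pick such a $c_{F,\eta}$, and observe that for any fixed $x \in M$ and $\epsilon > 0$, every $(F, \eta)$ beyond $(\{x\}, \epsilon)$ contains $x$ and has $\eta \leq \epsilon$, so $\alpha \geq d(c_{F,\eta}, x) \geq \alpha - \epsilon$.

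The main step is thus the finitary assertion. I would prove it in two moves. First, midpoints of points of $M$ stay in $M$: writing $M = \bigcap_\beta E_{a_\beta, r_\beta}$ as an intersection of balls and taking $a, b \in M$, any $c \in m(a,b)$ satisfies
\[ d(c, a_\beta) \leq \tfrac{1}{2}\bigl(d(a, a_\beta) + d(b, a_\beta)\bigr) \leq r_\beta, \]
so $c \in M$. Iterating dyadically, I pad a given finite set $\{x_1, \ldots, x_n\}$ with repetitions to length $2^k$, form pairwise midpoints, then midpoints of those midpoints, and so on; after $k$ rounds this produces a point $m \in M$ with
\[ d(m, y) \leq 2^{-k} \sum_{i=1}^{2^k} d(x_i, y) \qquad \forall y \in \X, \]
by repeated application of the midpoint inequality.

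Second, I invoke the hypothesis that $m$ itself is diametral: there exists $b \in M$ with $d(m, b) \geq \alpha - \eta'$ for any prescribed $\eta' > 0$. Substituting $y = b$ in the displayed inequality and using $d(x_i, b) \leq \alpha$ term by term forces $d(x_j, b) \geq \alpha - 2^k \eta'$ for each individual $j$: if any single term fell below $\alpha - s$, the whole sum would be at most $2^k\alpha - s$, which combined with the lower bound $2^k(\alpha - \eta')$ yields $s \leq 2^k\eta'$. Choosing $\eta' = \eta / 2^k$ produces the required $b$.

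The main obstacle I anticipate is precisely this move from the binary midpoint property to a statement about an arbitrarily large finite collection, since the hypothesis only supplies midpoints of pairs. The dyadic iteration resolves it by producing an approximate barycenter of $\{x_1, \ldots, x_n\}$, and combining its diametrality with the uniform upper bound $\alpha$ on individual distances forces every $d(x_i, b)$ close to $\alpha$ at once. Ball-convexity is what keeps all intermediate midpoints inside $M$; the net assembly at the end is purely formal.
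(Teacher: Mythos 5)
Your proof is correct and follows essentially the same route as the paper: pad the finite set to $2^k$ points, build an approximate barycenter by iterated pairwise midpoints (kept in $M$ by ball-convexity), apply the diametrality hypothesis to that barycenter, and use the averaging inequality together with the trivial bound $d(x_i,b)\le\alpha$ to force every individual distance close to $\alpha$, then assemble the net over pairs $(F,\eta)$. The only cosmetic differences are your additive error $\alpha-\eta$ versus the paper's multiplicative $(1-\varepsilon)\alpha$, and your padding by repetitions, which is in fact slightly cleaner than the paper's device of adjoining new points.
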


\begin{proof}

For $a_1,...,a_n \in \X$, we denote by $m(a_1,...,a_n)$ the set of all points $a\in \X$ satisfying $d(a,x)\le
(d(a_1,x) +...+d(a_n,x))/n$ for each $x\in \X$. The easy induction argument shows that if $\X$ has the midpoint
property and $n = 2^k$, then $m(a_1,...,a_n)$ is nonempty.

Let $F = (a_1,...,a_n)$ be a finite subset of $M$ and let
$\varepsilon > 0$. We will show that there is a point $c =
c(F,\varepsilon)$ such that $d(a_i,c)\geq (1 - \varepsilon)\alpha$
for all $i$. It is easy to see that a set with the midpoint
property is either a singleton or it is infinite. Adding new
points we may assume that $n = 2^k$. Let $b\in m(a_1,...,a_n)$. If
$F\subset E_{a,r}$ then $d(b,a) \le (d(a_1,a)+...+d(a_n,a))/n \le
r$ so $b\in E_{a,r}$. Thus $b$ belongs to each ball that contains
$M$; by ball-convexity, $b\in M$.

Since $b$ is diametral, there is $c\in M$ with
 $d(b,c) \ge (1-
\frac{\varepsilon}{n})\alpha$. It follows that $(1-
\frac{\varepsilon}{n})\alpha \le \frac{1}{n}\sum_{k=1}^n
d(a_k,c)$. If $d(a_j,c) < (1-\varepsilon)\alpha$, for some $j\le
n$, then $$\frac{1}{n}\sum_{k=1}^n d(a_j,c)<
\frac{1}{n}(1-\varepsilon)\alpha +\frac{n-1}n\alpha=(1-
\frac{\varepsilon}{n})\alpha,$$ a contradiction. Hence $d(a_j,c)
\ge (1-\varepsilon)\alpha$ for $j\le n$.

Let now $\Lambda$ be the set of all pairs $(F,\varepsilon)$ with
the order given by: $(F_1,\varepsilon_1) \prec (F_2,
\varepsilon_2)$ if $F_1\subset F_2, \varepsilon_1 >
\varepsilon_2$. For $\lambda = (F,\varepsilon)\in \Lambda$, we
write $c_{\lambda} = c(F,\varepsilon)$. It is clear that
$\lim_{\lambda} d(c_{\lambda},x) = \alpha$ for each $x\in M$.
\end{proof}

The class of ball-convex sets is not sufficiently rich for our
needs, because it is not closed with respect to unions of
increasing families of sets. For this reason we introduce the
following class of sets, which does not have this drawback.

\begin{definition}
{\rm A subset $M$ of a metric space is called {\it mid-convex} if
$m(x,y)\subset M$ for each pair $(x,y)$ of points in $M$.}
\end{definition}

Clearly each ball is mid-convex. Since the class of all mid-convex
sets is closed under intersection, each ball-convex set is
mid-convex. It is also clear that each mid-convex subset $M$ of a
metric space with the midpoint property is itself a metric space
with the midpoint property. This shows that  Lemma \ref{perif}
admits the following extension:

\begin{lemma}\label{perif1}
Let $M$ be a bounded mid-convex subset of a metric space $(\X,d)$
having the midpoint property. If all points of $M$ are diametral,
then $M$ contains a net $\{c_{\lambda}: \lambda\in \Lambda \}$
with the property: $\lim_{\lambda} d(c_{\lambda},x) = \diam {M}$
for each $x\in M$.
\end{lemma}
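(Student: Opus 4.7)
The plan is to deduce Lemma \ref{perif1} directly from Lemma \ref{perif} by viewing $M$ itself as the ambient metric space. To carry this out I first note the observation made just before the statement of the lemma: since $\X$ has the midpoint property, any mid-convex subset $M$ of $\X$, equipped with the restriction of $d$, again has the midpoint property. Indeed, given $a,b \in M$, the midpoint property of $\X$ yields some $c \in m(a,b) \subset \X$; mid-convexity of $M$ forces $c \in M$; and the midpoint inequality $d(c,x) \le \tfrac12(d(a,x)+d(b,x))$, valid for every $x\in \X$, holds in particular for every $x \in M$. Hence $c$ witnesses the midpoint property of $(M,d|_M)$.

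Next I would point out that, regarded as a subset of the metric space $(M,d|_M)$, the set $M$ is trivially ball-convex, being the whole space. Moreover, the notions used in the statement of Lemma \ref{perif} are intrinsic to $M$: the diameter $\diam M$ in $(M,d|_M)$ coincides with its diameter as a subset of $\X$, and a point $a \in M$ is diametral (in $M$) in exactly the same sense.

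With these identifications in place, the hypotheses of Lemma \ref{perif} are met by $M$ in its own right: $(M,d|_M)$ has the midpoint property, $M$ is ball-convex in itself, and every point of $M$ is diametral. Lemma \ref{perif} applied within $(M,d|_M)$ then produces a net $\{c_\lambda : \lambda \in \Lambda\} \subset M$ with $\lim_\lambda d(c_\lambda,x) = \diam M$ for every $x \in M$, which is exactly the conclusion sought.

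The main (minor) obstacle is purely a verification issue rather than a conceptual one: checking that the induction step in the proof of Lemma \ref{perif} producing points of $m(a_1,\dots,a_{2^k})$ never escapes $M$. This is automatic from mid-convexity, applied $k$ times along a binary tree of pairwise midpoints; the associated inequality $d(c,x) \le \tfrac1n \sum_i d(a_i,x)$ is inherited at each merging step. Once this is noted, the reduction to Lemma \ref{perif} is complete and no further construction is required.
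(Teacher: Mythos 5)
Your proposal is correct and is exactly the paper's intended argument: the authors state Lemma \ref{perif1} immediately after observing that a mid-convex subset of a space with the midpoint property inherits that property, and derive it by applying Lemma \ref{perif} to $M$ viewed as a metric space in its own right (where it is ball-convex, e.g.\ $M=E_{a,\diam M}$ for any $a\in M$ by boundedness). Your extra verification that the $2^k$-fold midpoint construction stays inside $M$ is a correct and worthwhile detail that the paper leaves implicit.
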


\section{Fixed points for groups of biholomorphic maps on the operator ball}

Let $K,H$ be real or complex Hilbert spaces; by $L(K,H)$ we denote
the Banach space of all bounded linear operators from $K$ to $H$.
We denote the open unit ball of $L(K,H)$ by $\B$ and call it {\it
operator ball}. The main goal of this section is to show that if
$K$ and $H$ are complex spaces and $\dim K<\infty$, then the
operator ball possesses a metric $\rho$ which is invariant with
respect to biholomorphic transformations of $\B$ and satisfies the
conditions of Theorem~\ref{fp-gen}, and therefore any group of
biholomorphic automorphisms of the operator ball with a bounded
orbit has a fixed point.

\subsection{M\"obius transformations and invariant distance in the operator ball}\label{S:distance}

For each $A \in \B$, we define a transformation $M_A$ of $\B$ ({\it a M\"obius transformation}) setting
\begin{equation}\label{mobius}
M_A(X) = (1-AA^*)^{-1/2}(A+X)(1+A^*X)^{-1}(1-A^*A)^{1/2}
\end{equation}
(this definition  is due to Potapov \cite{Pot55}). It can be proved that
\begin{equation}\label{conv}M_A^{-1} = M_{-A}\end{equation} (see \cite{harris}, Theorem
2).\medskip

We set
\begin{equation}\label{dist0}
\rho(A,B) = \tanh^{-1}( ||M_{-A}(B)||).
\end{equation}
It is easy to see that if $K,H$ are complex, then $\rho$ coincides
with the Carath\'eodory distance $c_{\B}$ in $\B$. Indeed, by
\cite[Theorem 4.1.8]{Vesent}, $c_{\B}(0,B) = \tanh^{-1}(\|B\|)$
(this holds for the unit ball of every Banach space). Furthermore
all $M_A$ are clearly holomorphic mappings of $\B$, and the
formula (\ref{conv}) shows that the same is true for their
inverses. In other words, $M_A$ are biholomorphic automorphisms of
$\B$. Since $c_{\B}$ is invariant with respect to biholomorphic
automorphisms and $M_{-A}$ sends $A$ to $0$, we get:
\begin{equation}\label{dist}
c_{\B}(A,B) = \tanh^{-1} ||M_{-A}(B)|| = \rho(A,B).
\end{equation}

 The equality (\ref{dist}) shows that $\rho$ is a metric on
 $\B$.\medskip

To check that the same is true in the case of real scalars, one
can use the complexification. Indeed, setting $\tilde{H} = H
\oplus iH$, $\tilde{K} = K\oplus iK$ one defines maps $A\to
\tilde{A}$ from $L(H)$ to $L(\tilde{H})$ (and from $L(K)$ to
$L(\tilde{K})$, from $L(K,H)$ to $L(\tilde{K},\tilde{H})$, from
$L(H,K)$ to $L(\tilde{H},\tilde{K})$) by $\tilde{A}(x\oplus iy) =
Ax\oplus iAy$. These maps are isometric *-homomorphisms of
algebras and modules whence $\widetilde{M_A(B)} =
M_{\tilde{A}}(\tilde{B})$ and $\rho(A,B) =
\rho(\tilde{A},\tilde{B})$.

\subsection{Midpoint property of $(\B, \rho)$}

Our next aim is to prove that the metric space $(\B,\rho)$ has the
midpoint property.

\begin{lemma}\label{two}
$\rho(A,-A) = 2\rho(0,A)$.
\end{lemma}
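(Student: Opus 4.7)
The plan is to prove the identity by direct computation, unwinding both sides of $\rho(A,-A)=2\rho(0,A)$ using the formula~(\ref{mobius}) and the definition~(\ref{dist0}) of $\rho$.

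First I would observe that $\rho(0,A)=\tanh^{-1}\|M_{0}(A)\|=\tanh^{-1}\|A\|$, since $M_{0}$ is the identity. To compute $\rho(A,-A)=\tanh^{-1}\|M_{-A}(-A)\|$, I would substitute $X=-A$ into the definition of $M_{-A}$ to get
$$
M_{-A}(-A)=-2\,(1-AA^{*})^{-1/2}\,A\,(1+A^{*}A)^{-1}(1-A^{*}A)^{1/2}.
$$

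The key simplification uses the standard intertwining identity $A\,f(A^{*}A)=f(AA^{*})\,A$ (true for polynomials and hence, by continuous functional calculus, for any continuous $f$). Applying it first with $f(t)=(1+t)^{-1}$ and then with $f(t)=(1-t)^{1/2}$ migrates the factor $A$ to the right of all the spectral factors. Since $(1-AA^{*})^{-1/2}$, $(1+AA^{*})^{-1}$ and $(1-AA^{*})^{1/2}$ are all functions of $AA^{*}$, they commute and the first and third collapse, giving the clean form
$$
M_{-A}(-A)=-2\,(1+AA^{*})^{-1}A.
$$

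Next I would compute the norm. Using $\|T\|^{2}=\|T^{*}T\|$ and intertwining once more,
$$
\|M_{-A}(-A)\|^{2}=4\,\bigl\|A^{*}A(1+A^{*}A)^{-2}\bigr\|=4\sup_{s\in\sigma(|A|)}\frac{s^{2}}{(1+s^{2})^{2}}.
$$
The function $s\mapsto s^{2}/(1+s^{2})^{2}$ is increasing on $[0,1]$, and $\|A\|<1$, so the supremum is attained at $s=\|A\|$, yielding $\|M_{-A}(-A)\|=\dfrac{2\|A\|}{1+\|A\|^{2}}$. Finally, the double-angle formula $\tanh(2x)=2\tanh x/(1+\tanh^{2}x)$ with $x=\tanh^{-1}\|A\|$ gives
$$
\rho(A,-A)=\tanh^{-1}\!\Bigl(\tfrac{2\|A\|}{1+\|A\|^{2}}\Bigr)=2\tanh^{-1}\|A\|=2\rho(0,A).
$$

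In the real scalar case the same argument applies after passing to the complexification, as explained at the end of Subsection~\ref{S:distance}. The only genuinely delicate point is the bookkeeping of the intertwining identities that collapse $M_{-A}(-A)$ to $-2(1+AA^{*})^{-1}A$; once that is done, the norm computation and the $\tanh$ identity are routine.
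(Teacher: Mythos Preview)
Your proof is correct and follows essentially the same route as the paper's: both compute $M_{-A}(-A)$ explicitly, simplify it via the intertwining identity $A\,f(A^{*}A)=f(AA^{*})\,A$, reduce the norm to $2\|A\|/(1+\|A\|^{2})$ by the spectral calculus and monotonicity of $s\mapsto s/(1+s^{2})$ on $[0,1]$, and finish with the double-angle formula for $\tanh$. The only cosmetic differences are that the paper phrases the simplification through the polar decomposition $A=UT$ (obtaining $U\cdot 2T(1+T^{2})^{-1}$) and applies $\tanh^{-1}$ at the operator level before taking norms, whereas you collapse directly to $-2(1+AA^{*})^{-1}A$ and take the norm first; the substance is identical.
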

\begin{proof}
Let $A = UT$ be the polar decomposition of $A$ (so $T = |A| = (A^*A)^{1/2}$). Then
\[\begin{split} \rho(A,-A)& =
\tanh^{-1}\|M_A(A)\| =
\tanh^{-1}\|(1-AA^*)^{-1/2}2A(1+A^*A)^{-1}(1-A^*A)^{1/2}\|\\&=
\tanh^{-1}\|2A(1-A^*A)^{-1/2}(1+A^*A)^{-1}(1-A^*A)^{1/2}\|\\ & =
\tanh^{-1}\|U2T(1+T^2)^{-1}\|= \tanh^{-1}\|2T(1+T^2)^{-1}\| =
\|\tanh^{-1}(2T(1+T^2)^{-1})\|\\ & = \|2\tanh^{-1}(T)\| =
2\tanh^{-1}\|T\| = 2\tanh^{-1}\|A\| = 2 \rho(0,A)\end{split}\] (we
used the identity $\tanh(2x)=2\tanh x/(1+\tanh ^2x)$ and the fact
that $\|f(T)\| = f(\|T\|)$ if a function $f$ is non-decreasing on
the interval $[0,\|T\|]$ and $T$ is a positive operator).
\end{proof}

\begin{lemma}\label{simm}
For every $A,B\in \B$, there is an isometric transformation $\varphi$ of $(\B,\rho)$ with $\varphi(A) = -
\varphi(B)$.
\end{lemma}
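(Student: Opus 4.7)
My plan is to build $\varphi$ as a composition involving M\"obius transformations and the sign-flip $\sigma\colon X \mapsto -X$, all of which I will argue are $\rho$-isometries. First I reduce to the case $A=0$: the M\"obius map $M_{-A}$ is biholomorphic by (\ref{conv}), hence a $\rho$-isometry in the complex case by the definition of the Carath\'eodory metric and in the real case by the complexification argument described after~(\ref{dist}). Since $M_{-A}(A)=0$, setting $C := M_{-A}(B) \in \B$ it suffices to produce a $\rho$-isometry $\psi$ of $\B$ with $\psi(0) = -\psi(C)$, for then $\varphi := \psi \circ M_{-A}$ satisfies $\varphi(A) = \psi(0) = -\psi(C) = -\varphi(B)$.

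Next I would verify that $\sigma$ is itself a $\rho$-isometry. A direct expansion of (\ref{mobius}) yields the Potapov-type identity
\[ M_A(-B) \;=\; -M_{-A}(B), \]
and hence $\rho(-A,-B) = \tanh^{-1}\|M_A(-B)\| = \tanh^{-1}\|M_{-A}(B)\| = \rho(A,B)$.

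The heart of the argument is to find $D \in \B$ for which $\psi := M_D \circ \sigma$ does the job. Using the functional-calculus intertwining $(1-DD^*)^{-1/2}D = D(1-D^*D)^{-1/2}$, one reads off from (\ref{mobius}) that $M_D(0) = D$, so $\psi(0) = D$ and $\psi(C) = M_D(-C)$. The condition $\psi(0) = -\psi(C)$ thus reads $M_D(-C) = -D$, which by (\ref{conv}) is equivalent to $-C = M_{-D}(-D)$. A direct calculation from (\ref{mobius}), again using the intertwining identity, collapses to the compact form
\[ M_{-D}(-D) \;=\; -2D(1+D^*D)^{-1}, \]
so the task is reduced to solving the operator equation $C = 2D(1+D^*D)^{-1}$ for $D \in \B$.

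Finally I would solve this equation by polar decomposition. Writing $C = U|C|$ and looking for $D$ of the form $D = U|D|$, so that $D^*D = |D|^2$, the equation collapses to $|C|(1+|D|^2) = 2|D|$. The scalar quadratic $|C|t^2 - 2t + |C| = 0$ has the relevant root $t = (1-\sqrt{1-|C|^2})/|C|$, which extends by continuity to the value $0$ at $0$ and satisfies $t < 1$ on $[0,1)$ because $1-c < \sqrt{1-c^2}$ there. Defining $|D| := f(|C|)$ with $f(c) = (1-\sqrt{1-c^2})/c$ by functional calculus then yields $\||D|\| < 1$, so $D = U|D| \in \B$ is the required operator. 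The one genuinely technical step is the functional-calculus simplification producing the compact expression for $M_{-D}(-D)$; once that identity is in hand, everything else reduces to a one-variable algebraic computation.
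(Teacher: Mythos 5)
Your proposal is correct and follows essentially the same route as the paper: reduce to $A=0$ by transitivity of the M\"obius transformations, reduce the problem to the operator equation $C=2D(1+D^*D)^{-1}$ (the paper writes this as $M_C(C)=B$ and computes $M_C(C)=U\,2T(1+T^2)^{-1}$ in Lemma \ref{two}), and solve it via polar decomposition and scalar functional calculus. The only cosmetic differences are that you insert the sign-flip $\sigma$ into the isometry (the paper in effect just uses $M_{-C}$) and solve the scalar quadratic explicitly as $t=(1-\sqrt{1-c^2})/c$ rather than writing the root as $\tanh(\tfrac12\tanh^{-1}(c))$; these are the same root.
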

\begin{proof}
It suffices to show this for $A = 0$, because the M\"obius transformations (which are isometries of $(\B,\rho)$)
act transitively on $\B$. Thus we have to find $C,D$ in $\B$ such that $M_C(-D) = 0$, $M_C(D) = B$. Take $D =
C$, then we have the equation $M_C(C) = B$. By the above calculations, this means that $U2T(1+T^2)^{-1} = B$,
where $UT = C$ is the polar decomposition. If $B = VS$ is the polar decomposition of $B$, then we take $U = V$
and $T = \tanh(\frac{1}{2}\tanh^{-1}(S))$.
\end{proof}

\begin{proposition}\label{middle}
The space $(\B,\rho)$ has the midpoint property.
\end{proposition}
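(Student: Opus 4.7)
The plan is to combine the three lemmas (\ref{symmetry}, \ref{two}, \ref{simm}) just established. Given $A,B\in\B$, I will use Lemma \ref{simm} to transport the pair to an antipodal pair via an isometry, show that the antipodal map $\sigma(Y)=-Y$ is a reflection in $0$ (in the sense of the definition preceding Lemma \ref{symmetry}), conclude via Lemma \ref{symmetry} that $0$ is a midpoint for the transported pair, and finally pull the midpoint back through the isometry.

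First, I would verify that $\sigma:\B\to\B$, $\sigma(Y)=-Y$, is an isometric involution of $(\B,\rho)$. Involutivity is obvious. For the isometry property, a direct look at the definition of $M_A$ in \eqref{mobius} shows that $M_A(-B)=-M_{-A}(B)$ (the outer factors coincide, and the middle factor changes sign: $A-B=-(-A+B)$, while $(1-A^*B)^{-1}$ is the same for both). Taking norms, $\|M_A(-B)\|=\|M_{-A}(B)\|$, so $\rho(-A,-B)=\rho(A,B)$. Combining this with Lemma \ref{two}, which gives $\rho(Y,\sigma(Y))=\rho(Y,-Y)=2\rho(0,Y)$ for every $Y\in\B$, shows that $\sigma$ is a reflection in $0$.

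Next, given any $A,B\in\B$, apply Lemma \ref{simm} to obtain an isometry $\varphi$ of $(\B,\rho)$ with $\varphi(A)=-\varphi(B)$. Writing $X=\varphi(A)$, so that $\varphi(B)=-X=\sigma(X)$, Lemma \ref{symmetry} applied to the reflection $\sigma$ yields that $0$ is a midpoint for $(X,\sigma(X))=(\varphi(A),\varphi(B))$. It remains to transfer this midpoint back: the midpoint property is evidently preserved by isometries, since for any $W\in\B$,
\[
\rho(\varphi^{-1}(0),W)=\rho(0,\varphi(W))\le \tfrac{1}{2}\bigl(\rho(\varphi(A),\varphi(W))+\rho(\varphi(B),\varphi(W))\bigr)=\tfrac{1}{2}\bigl(\rho(A,W)+\rho(B,W)\bigr),
\]
so $\varphi^{-1}(0)\in m(A,B)$, and $(\B,\rho)$ has the midpoint property.

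The only real technical point is the identity $M_A(-B)=-M_{-A}(B)$, which upgrades $Y\mapsto -Y$ from an algebraic involution to a $\rho$-isometry; everything else is a clean packaging of the preceding lemmas. I do not expect any genuine obstacle, and I would present the argument in the order: isometry of $\sigma$, reflection property of $\sigma$ (via Lemma \ref{two}), application of Lemma \ref{simm} plus Lemma \ref{symmetry}, and pull-back.
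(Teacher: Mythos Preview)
Your proposal is correct and follows essentially the same route as the paper: reduce via Lemma~\ref{simm} to an antipodal pair, verify that $\sigma(Y)=-Y$ is a reflection in $0$ using Lemma~\ref{two}, and invoke Lemma~\ref{symmetry}. The paper merely asserts that $\sigma$ is ``clearly isometric'' and absorbs the pull-back into the phrase ``we may assume $B=-A$''; you have supplied these details explicitly (the identity $M_A(-B)=-M_{-A}(B)$ and the computation for $\varphi^{-1}(0)$), which is fine.
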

\begin{proof}
Let $A,B\in \B$; we have to prove that there is $C\in \B$ such that $\rho(X,C)\le
\frac{1}{2}(\rho(X,A)+\rho(X,B))$ for each $X\in \B$. By Lemma \ref{simm}, we may assume that $B = -A$. The
involution $\sigma(X) = -X$ is clearly isometric; it follows from Lemma \ref{two} that it is a reflection in the
point $0$. Now by Lemma \ref{symmetry}, $0$ is a midpoint for the pair $(A,-A)$.
\end{proof}

\subsection{Topologies on $\B$ and ball-compactness}

A set in $\B$ is  bounded with respect to $\rho$ if it is
contained in some $\rho$-ball, this happens if and only if the set
is contained in a $\rho$-ball with center $0$. On the other hand,
it follows from the definition of $\rho$ that a $\rho$-ball with
center $0$ coincides with the closure of a multiple $r\B$ of the
operator ball $\B$, for some $r<1$. Therefore a set is bounded if
and only if it is separated from the boundary of $\B$.

The following lemma is a special case of a more general result proved in \cite[Theorem IV.2.2]{Vesent} (in the
case of real scalars one can use the complexification).

\begin{lemma}\label{twomet}
The metric $\rho$ is equivalent to the operator norm on any
$\rho$-bounded set.
\end{lemma}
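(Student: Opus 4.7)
The plan is to reduce to the complex case via the complexification argument already given at the end of Section~3.1, and then to exploit the explicit formula (\ref{mobius}) for the M\"obius transformations. A $\rho$-bounded subset of $\B$ is, as observed just before the lemma, contained in a set of the form $\mathcal{S}_r := \{A\in\B:\|A\|\le r\}$ for some $r<1$; I will show that the norm topology and the $\rho$-topology agree on $\mathcal{S}_r$.

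For the forward direction, fix $A\in\mathcal{S}_r$ and suppose $A_n\in\mathcal{S}_r$ with $\|A_n-A\|\to 0$. By (\ref{dist0}) and the continuity of $\tanh^{-1}$ at $0$, it suffices to prove that $\|M_{-A}(A_n)\|\to 0$. Reading off (\ref{mobius}), $M_{-A}(A_n)$ is built from the three factors $(1-AA^*)^{-1/2}$, $(A_n-A)$, and $(1-A^*A_n)^{-1}(1-A^*A)^{1/2}$. The first factor is independent of $n$. The second tends to $0$ in norm by assumption. For the third, observe that $\|A^*A_n\|\le r^2<1$, so $1-A^*A_n$ is invertible with $\|(1-A^*A_n)^{-1}\|\le(1-r^2)^{-1}$, uniformly in $n$; combined with the resolvent identity $X^{-1}-Y^{-1}=X^{-1}(Y-X)Y^{-1}$ this yields norm-continuity of $(1-A^*A_n)^{-1}$ in $A_n$. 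Hence $M_{-A}(A_n)\to M_{-A}(A)=0$ in norm, as required.

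For the reverse direction, suppose $\rho(A_n,A)\to 0$, i.e.\ $\|M_{-A}(A_n)\|\to 0$. By (\ref{conv}) the map $M_A$ is inverse to $M_{-A}$, and $M_A$ is norm-continuous at $0$ by exactly the same argument applied with the roles of $A$ and $0$ interchanged (here the factor $(1-0\cdot A^*)^{-1}=1$ makes continuity trivial). Applying $M_A$ to $M_{-A}(A_n)$ therefore gives $A_n=M_A(M_{-A}(A_n))\to M_A(0)=A$ in norm.

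The main technical point is keeping the norms of the inverses $(1-A^*A_n)^{-1}$, $(1-AA^*)^{-1/2}$, $(1-A^*A)^{1/2}$ controlled uniformly, which is what confines us to $\mathcal{S}_r$ rather than all of $\B$: near the boundary these factors blow up and the equivalence fails, consistently with the fact that $\rho$-diameters blow up there. Once that uniform control is secured, the rest is routine functional-calculus continuity (applied to self-adjoint operators whose spectra lie in $[0,r^2]$) plus the continuity of $\tanh^{-1}$ on $[0,1)$.
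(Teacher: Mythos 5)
Your proof is correct, and it is genuinely different from what the paper does: the paper gives no argument at all for this lemma, instead citing \cite[Theorem IV.2.2]{Vesent} (a general result on the Carath\'eodory distance of bounded domains, valid only over $\mathbb{C}$, hence the paper's added remark about complexifying in the real case). Your route is a self-contained computation with the explicit M\"obius formula (\ref{mobius}): writing $M_{-A}(A_n)=(1-AA^*)^{-1/2}(A_n-A)(1-A^*A_n)^{-1}(1-A^*A)^{1/2}$ and bounding the outer factors by $(1-r^2)^{-1/2}$, $(1-r^2)^{-1}$ and $1$ on $\mathcal S_r$ gives the norm-to-$\rho$ direction with a uniform constant, and the identity $M_A\circ M_{-A}=\mathrm{id}$ from (\ref{conv}) plus continuity of $M_A$ at $0$ gives the converse; both steps are pure operator algebra, so they work verbatim for real scalars and make your opening appeal to complexification unnecessary. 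What each approach buys: the citation is shorter but imports a black box that must then be patched for the real case, while your argument is elementary, treats both scalar fields at once, and in fact yields quantitative two-sided estimates (e.g.\ $\rho(A,B)\le\tanh^{-1}\bigl((1-r^2)^{-3/2}\|A-B\|\bigr)$ on $\mathcal S_r$, and conversely $A-B=(1-AA^*)^{1/2}X(1+A^*X)^{-1}(1-A^*A)^{1/2}$ with $X=M_{-A}(B)$ gives $\|A-B\|\le\|X\|/(1-\|X\|)$), which is more than the topological equivalence the paper actually uses in Lemma \ref{invSet} (namely that a $\rho$-closed, $\rho$-bounded set is norm-closed). The only cosmetic slack in your write-up is that the norm-continuity of $(1-A^*A_n)^{-1}$ via the resolvent identity is not needed in the forward direction --- a uniform bound on that factor already suffices since the middle factor tends to $0$.
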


Thus the topology induced by the metric $\rho$ on $\B$ is
equivalent to the norm topology.
\medskip

Another important topology on $\B$ is the weak operator topology
\WOT~(see \cite[p.~476]{DS58}). We collect the facts about the
\WOT~that we need in the following lemma.

\begin{lemma}\label{WOTballs}
{\rm (i)} Each $\rho$-ball $E_{A,r}$ is convex and \WOT-compact.

{\rm (ii)} If $\dim K < \infty$, then each norm-closed convex
subset of $\B$ is \WOT-compact.

{\rm (iii)} If $\dim K < \infty$, then all M\"obius
transformations are $\WOT$-continuous.
\end{lemma}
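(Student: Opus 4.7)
For (i), I would begin by translating the condition $\rho(A,X) \le r$ into an operator-theoretic inequality. Setting $s = \tanh r$, a direct computation using the formula for $M_{-A}$ gives the Schwarz--Pick-type identity
$$I - M_{-A}(X)^*M_{-A}(X) = (1-A^*A)^{1/2}(1-X^*A)^{-1}(I-X^*X)(1-A^*X)^{-1}(1-A^*A)^{1/2},$$
from which $\rho(A,X)\le r$ is equivalent to
$$F(X) := X^*X + (1-s^2)(1-X^*A)(1-A^*A)^{-1}(1-A^*X) \le I.$$
Writing $V(X) = (1-A^*A)^{-1/2}(1-A^*X)$, which is affine in $X$, we have $F(X) = X^*X + (1-s^2)V(X)^*V(X)$; both summands are convex operator-valued functions (for any affine $W$, $W(X)^*W(X)$ is convex by a direct calculation), so the sub-level set $\{F\le I\}$ is convex. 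It is also \WOT-closed: $X\mapsto V(X)$ is affine, hence \WOT-continuous, and for each vector $u$ the maps $X\mapsto \|Xu\|^2$ and $X\mapsto \|V(X)u\|^2$ are \WOT-lower-semicontinuous, since \WOT-convergence implies weak pointwise convergence and the norm on a Hilbert space is weakly lower-semicontinuous. Finally, the triangle inequality gives $\|X\|\le \tanh(\rho(0,A)+r)<1$ on $E_{A,r}$, placing it inside a norm ball which is \WOT-compact by the standard Tychonoff argument; hence $E_{A,r}$ is \WOT-compact.

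For (ii), I would fix an orthonormal basis of $K$ to identify $L(K,H)$ linearly with $H^n$, where $n = \dim K$. The two norms are equivalent, and \WOT~on $L(K,H)$ becomes the weak topology on the Hilbert space $H^n$. A convex, norm-closed subset of $\B$ is then a convex, norm-closed, norm-bounded subset of $H^n$; it is weakly closed by Mazur's theorem and weakly compact by reflexivity of $H^n$ combined with Banach--Alaoglu, and so is \WOT-compact in $L(K,H)$.

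For (iii), I would factor $M_A$ through translation by $A$, two-sided multiplication by the fixed operators $(1-AA^*)^{-1/2}$ and $(1-A^*A)^{1/2}$, and the inversion $X\mapsto (I+A^*X)^{-1}$. The first two operations are \WOT-continuous on all of $L(K,H)$ without any dimension assumption (multiplication from the left or right by a fixed bounded operator preserves \WOT). For the last step, $X\mapsto A^*X$ is a \WOT-continuous map from $L(K,H)$ into $L(K,K)$, and under $\dim K<\infty$ the space $L(K,K)$ is finite-dimensional, so its \WOT coincides with the norm topology, in which inversion is continuous at every invertible point (and $I+A^*X$ is invertible since $\|A^*X\|<1$). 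The main obstacle lies in part (i): one must find the correct operator-theoretic reformulation of $\rho(A,X)\le r$ via a Schwarz--Pick identity and verify convexity and \WOT-closedness despite the non-linearity and a priori absence of \WOT-continuity of $M_A$ itself; parts (ii) and (iii) then reduce to routine applications of finite-dimensional linear algebra and weak-compactness.
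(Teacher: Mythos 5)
Your proposal is correct, and for parts (i) and (iii) it takes a genuinely different, self-contained route where the paper simply cites the literature. For (i) the paper observes that $E_{A,r}$ is the image of a norm ball $\{\|X\|\le\tanh r\}$ under the M\"obius map $M_A$ and invokes the result of Khatskevich--Shulman that fractional-linear transformations carry balls to convex \WOT-compact sets; you instead prove the needed facts directly by rewriting $\rho(A,X)\le r$ via the Schwarz--Pick identity as the sub-level condition $X^*X+(1-s^2)V(X)^*V(X)\le I$ with $V$ affine, and I checked that both the identity and the resulting operator-convexity and \WOT-lower-semicontinuity arguments are sound (note that, like the paper's, your argument for (i) needs no assumption on $\dim K$). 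For (iii) the paper again cites Krein's \WOT-continuity of fractional-linear maps with compact $(1,2)$-entries, while you factor $M_A$ explicitly and exploit that $L(K,K)$ is finite-dimensional so that $X\mapsto(1+A^*X)^{-1}$ is norm-continuous; the only step you leave implicit is that the product of a norm-bounded \WOT-convergent net in $L(K,H)$ with a norm-convergent net in $L(K,K)$ is \WOT-convergent, which is routine since all factors are uniformly bounded on $\B$. Part (ii) is essentially identical to the paper's argument (identification of \WOT\ with the weak topology of the reflexive space $L(K,H)\cong H^n$, Mazur, and weak compactness of bounded closed convex sets). The trade-off is that your version is longer but entirely self-contained, whereas the paper's is shorter at the cost of two external references.
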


\begin{proof}
(i) By definition, each $\rho$-ball is the image of a $\rho$-ball with center $0$ under a M\"obius transform.
Since $0$-centered $\rho$-balls are the usual balls and since a M\"obius transform is a fractional-linear
transformation, the statement follows from the fact that images of balls under fractional-linear transformations
are convex and \WOT-compact (see, for example, \cite{KS95}). \smallskip

(ii) It is easy to check that if $K$ is finite-dimensional, then the norm topology of $L(K,H)$ coincides with
the strong operator topology while \WOT~coincides with the weak topology of $L(K,H)$. Since convex norm-closed
bounded subsets of a reflexive space are weakly compact, the result follows.
\medskip

(iii) The \WOT-continuity of a more general class of maps
(``fractional-linear maps with compact (1,2)-entries'') was
noticed and used by Krein in \cite{Krein}.
\end{proof}

\begin{corollary}\label{ball-comp} {\rm (i)} The metric space $(\B,\rho)$ is ball-compact.

{\rm (ii)} Ball-convex subsets of $(\B,\rho)$ are \WOT-compact.
\end{corollary}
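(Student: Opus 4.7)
The plan is to deduce both parts directly from Lemma \ref{WOTballs}, using the fact that \WOT~is a Hausdorff topology on $L(K,H)$ in which every $\rho$-ball is compact.

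For part (i), I would start with a family $\{E_{\lambda}\}_{\lambda \in \Lambda}$ of $\rho$-balls in $\B$ such that every finite subfamily has nonempty intersection. By Lemma \ref{WOTballs}(i), each $E_{\lambda}$ is \WOT-compact, hence \WOT-closed (since \WOT~is Hausdorff). Fix any $\lambda_0 \in \Lambda$. The sets $\{E_{\lambda_0} \cap E_{\lambda}\}_{\lambda \in \Lambda}$ form a family of \WOT-closed subsets of the \WOT-compact space $E_{\lambda_0}$ with the finite intersection property, so their intersection is nonempty. This intersection equals $\bigcap_{\lambda} E_{\lambda}$, giving ball-compactness of $(\B,\rho)$.

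For part (ii), recall that a ball-convex set $M \subset \B$ is by definition an intersection $M = \bigcap_{i \in I} E_i$ of a family of $\rho$-balls. By the same reasoning, each $E_i$ is \WOT-closed, so $M$ is \WOT-closed. Moreover, $M$ is contained in any single one of the balls $E_{i_0}$, which is \WOT-compact by Lemma \ref{WOTballs}(i). A \WOT-closed subset of a \WOT-compact set is \WOT-compact, so $M$ is \WOT-compact.

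There is no real obstacle: both statements are essentially repackagings of Lemma \ref{WOTballs}(i), together with the standard topological facts that (a) compact subsets of Hausdorff spaces are closed, (b) a family of closed subsets of a compact space with the finite intersection property has nonempty intersection, and (c) closed subsets of compact spaces are compact. The only point to verify carefully is that ball-convex sets actually sit inside a common \WOT-compact ball, which is immediate from their definition as an intersection of $\rho$-balls (any single one of which already provides such a compact ambient set).
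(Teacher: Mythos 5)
Your proof is correct and is exactly the argument the paper intends: the corollary is stated without proof as an immediate consequence of Lemma \ref{WOTballs}(i), via the standard facts that \WOT-compact sets are \WOT-closed (as \WOT~is Hausdorff) and that families of closed sets with the finite intersection property inside a compact set have nonempty intersection. Nothing is missing.
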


\subsection{The normal structure of $(\B,\rho)$}\label{normal_section}

Recall that the normal structure means the existence of
non-diametral points in ball-convex sets. We establish the
existence of non-diametral points for a wider class of subsets of
$\B$.

\begin{definition}\label{m-set}
{\rm A subset $M$ of $\B$ is called an {\it $m$-set} if it is
mid-convex and \WOT-compact.}
\end{definition}

It follows from the results of the previous section that the class
of all $m$-sets contains the class of all ball-convex sets.
Furthermore, if $M$ is a $\rho$-bounded (= separated from the
boundary) subset of $\B$, then there is a smallest $m$-set $M_0$
containing $M$ (the intersection of all $m$-sets containing $M$).
We call $M_0$ {\it the $m$-span of} $M$.

\begin{theorem}\label{T:non-diam} Let $K$ be finite dimensional. Then any $m$-set in $\B$ either is a singleton or
has a non-diametral point.
\end{theorem}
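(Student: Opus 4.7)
My plan is to argue by contradiction and to induct on $n=\dim K$; the case $n=0$ is trivial. So let $M$ be an $m$-set in $\B\subset L(K,H)$ with $\dim K=n\ge 1$, assume $M$ is not a singleton and that every point of $M$ is diametral, and set $\alpha=\diam M>0$. I will concentrate on the case $\alpha<\infty$ so that Lemma \ref{perif1} applies directly; the case $\alpha=\infty$ should be handled analogously, using a net along which $\|\tilde c_\lambda\|\to 1$ in place of $\|\tilde c_\lambda\|\to\tanh\alpha$.

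By Lemma \ref{perif1} there is a net $\{c_\lambda\}\subset M$ with $\rho(c_\lambda,x)\to\alpha$ for every $x\in M$. The \WOT-compactness of $M$ lets me pass to a subnet with $c_\lambda\to c_\infty$ in \WOT, where $c_\infty\in M$. Apply $\varphi:=M_{-c_\infty}$, a $\rho$-isometry that is \WOT-continuous by Lemma \ref{WOTballs}(iii). Put $\tilde c_\lambda:=\varphi(c_\lambda)$ and $\tilde M:=\varphi(M)$; then $\tilde M$ is an $m$-set of diameter $\alpha$ containing $0$, with every point diametral, $\tilde c_\lambda\to 0$ in \WOT, and $\|\tilde c_\lambda\|\to\tanh\alpha$. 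Since $L(K)$ is finite-dimensional, a further subnet makes $\tilde c_\lambda^*\tilde c_\lambda\to G$ in operator norm, with $G\ge 0$ and $\|G\|=\tanh^2\alpha$.

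For fixed $y\in\tilde M$, I exploit the identity
\[
1-M_{-\tilde c_\lambda}(y)^*M_{-\tilde c_\lambda}(y)=(1-\tilde c_\lambda^*\tilde c_\lambda)^{1/2}(1-y^*\tilde c_\lambda)^{-1}(1-y^*y)(1-\tilde c_\lambda^*y)^{-1}(1-\tilde c_\lambda^*\tilde c_\lambda)^{1/2},
\]
which comes from the defining formula for $M_A$ by a routine computation. Because $\tilde c_\lambda\to 0$ in \WOT and the products $y^*\tilde c_\lambda$, $\tilde c_\lambda^*y$ land in the finite-dimensional space $L(K)$, they tend to $0$ in norm. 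Norm-limits (using continuity of $T\mapsto(1-T)^{1/2}$ on operator-norm-bounded subsets of $L(K)$) then give $M_{-\tilde c_\lambda}(y)^*M_{-\tilde c_\lambda}(y)\to G+(1-G)^{1/2}y^*y(1-G)^{1/2}$, whose norm equals $\lim\|M_{-\tilde c_\lambda}(y)\|^2=\tanh^2\alpha=\|G\|$. Choose a unit eigenvector $v_0\in K$ of $G$ for the top eigenvalue $\tanh^2\alpha$; since both summands are positive and $\|G+B\|=\|G\|$ is attained at $v_0$, one obtains $\langle Bv_0,v_0\rangle=\|y(1-G)^{1/2}v_0\|^2=0$. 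As $(1-G)^{1/2}v_0=\sqrt{1-\tanh^2\alpha}\,v_0$ is a nonzero multiple of $v_0$, this forces $yv_0=0$.

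Hence every $y\in\tilde M$ annihilates $v_0$, so $\tilde M$ sits inside the operator ball over $K_1:=v_0^\perp$. Restriction to $K_1$ preserves the operator norm and intertwines M\"obius transformations (each factor in $M_{-y_1}(y_2)$ either fixes $v_0$ or annihilates it whenever $y_1$ and $y_2$ kill $v_0$), so $\rho$, mid-convexity, and \WOT-compactness all transport; thus $\tilde M$ is a non-singleton $m$-set in dimension $n-1$ with all points diametral, which contradicts the inductive hypothesis. The main technical obstacle is the norm-limit analysis of the displayed identity, in particular the functional-calculus continuity of $T\mapsto(1-T)^{1/2}$ on the net $\tilde c_\lambda^*\tilde c_\lambda$; the remaining care is in dealing with the $\alpha=\infty$ case by replacing $\tanh^2\alpha$ by $\|G\|=1$ in the eigenvector argument, and in verifying that the reduction to $L(K_1,H)$ is compatible with all structures used.
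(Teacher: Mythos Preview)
Your analytical core is correct and in fact very close to the paper's own computation: after normalising the \WOT-limit to $0$ and passing to a subnet with $\tilde c_\lambda^*\tilde c_\lambda\to G$, both you and the paper arrive (in different notation) at the limit
\[
\lim_\lambda M_{-\tilde c_\lambda}(y)^*M_{-\tilde c_\lambda}(y)
= G+(1-G)^{1/2}y^*y\,(1-G)^{1/2}.
\]
The paper then chooses $y=\tilde c_\mu$ with $\tilde c_\mu^*\tilde c_\mu$ close to $G$, so that the right-hand side has norm approximately $2\beta^2-\beta^4>\beta^2$ (with $\beta=\tanh\alpha$), contradicting $\diam M=\alpha$ directly. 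You instead keep $y$ arbitrary, use that the norm of the limit must equal $\beta^2$, and extract the eigenvector consequence $yv_0=0$; this is a genuinely different and rather elegant way to exploit the same limit.

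However, your inductive wrap-up has a real gap. To apply the inductive hypothesis you need $r(\tilde M)\subset\B_{L(K_1,H)}$ to be mid-convex \emph{in the sense of} $L(K_1,H)$, i.e.\ to contain every $L(K_1,H)$-midpoint of each of its pairs. What you know is only that $\tilde M$ contains every $L(K,H)$-midpoint. An $L(K_1,H)$-midpoint $c'$ satisfies the defining inequality merely for test points $x'\in\B_{L(K_1,H)}$; its extension by $c v_0=0$ need not satisfy it for all $x\in\B_{L(K,H)}$, so you cannot conclude $c\in m_{L(K,H)}(a,b)\subset\tilde M$. Your remark that restriction ``intertwines M\"obius transformations'' shows the two $\rho$-metrics agree on the subspace, but the midpoint condition is a global one and does not reduce to the subspace.

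The good news is that the induction is unnecessary. Your own argument already yields a contradiction one line earlier: since $\tilde c_\lambda\in\tilde M$ for every $\lambda$, your conclusion $yv_0=0$ applied with $y=\tilde c_\lambda$ gives $\tilde c_\lambda v_0=0$, hence $\tilde c_\lambda^*\tilde c_\lambda v_0=0$, and in the limit $Gv_0=0$. But $v_0$ was chosen as a unit eigenvector of $G$ for the eigenvalue $\|G\|=\tanh^2\alpha>0$, a contradiction. With this observation your proof is complete (for $\alpha<\infty$) and arguably cleaner than the paper's $\varepsilon$-bookkeeping; drop the induction entirely. As for the case $\alpha=\infty$, note that the paper's proof also invokes Lemma~\ref{perif1} (which assumes boundedness) without comment, so this is not a discrepancy between your argument and the paper's.
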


\begin{proof}
Let $M$ be an $m$-set of $(\B,\rho)$ which is not a singleton. We
have to prove that $M$ contains a non-diametral point. \medskip

Assume the contrary, that is, all points in $M$ are diametral. Let $\alpha=\diam M>0$. Since $\B$ and therefore
$M$ has the midpoint property, we have, by Lemma \ref{perif1}, that there is a net $\{A_{\lambda}\}$ in $M$ such
that $\lim_{\lambda} \rho(A_{\lambda},X) = \alpha$ for each $X\in M$.\medskip

Since $M$ is \WOT-compact, the net $\{A_{\lambda}: \lambda\in
\Lambda\}$ contains a weakly convergent cofinal subnet. To
simplify the notation we assume that the net $\{A_{\lambda}:
\lambda\in \Lambda\}$ itself \WOT-converges to some operator $W$.
Since $M$ is \WOT-compact, we have $W\in M$.
\medskip

Since $W\in M$, we get
\begin{equation}\label{E:limrho}\lim_{\lambda}\rho(W,A_{\lambda})=\alpha.
\end{equation}

We get a contradiction by proving
\begin{equation}\label{E:gamma}\sup_{\lambda,\mu}\rho(A_{\lambda},A_{\mu})>\alpha.\end{equation}

Since all M\"obius transformations are \WOT-continuous isometries
of $(\B,\rho)$ (see  Lemma \ref{WOTballs}(iii)), we may assume
without loss of generality that $W=0$ (otherwise we apply
$M_{-W}$).
\medskip

Let $\beta=\tanh\alpha$. Then \eqref{E:limrho} leads to $\lim_{{\lambda}}||A_{\lambda}||=\beta$ and it suffices
to show that
$$\sup_{{\lambda},{\mu}}||M_{A_{\mu}}(-A_{\lambda})||>\beta.$$

Since $K$ is finite dimensional and $A_{\lambda}\in L(K,H)$, we can select a strongly convergent subnet in the
net $\{A_{\lambda}^*A_{\lambda}\}$. So we assume that  $A_{\lambda}^*A_{\lambda}\to P$, where $P\in L(K,K)$. It
is clear that $P\ge 0$ and $\|P\| = \beta^2$.
\medskip

Choose $\varepsilon> 0$ and fix $\mu$ with $\|A_{\mu}^*A_{\mu} - P\|<\varepsilon $. For brevity, denote
$A_{\mu}^*A_{\mu}$ by $Q$. We prove that  $\lim_{{\lambda}}\|M_{A_{\mu}}(-A_{\lambda})\|
> \beta$ if $\varepsilon>0$ is small enough.
By the definition,
\begin{equation}\label{mobius2}
M_{A_{\mu}}(-A_{\lambda}) =
(1-A_{\mu}A_{\mu}^*)^{-1/2}(A_{\mu}-A_{\lambda})(1-A_{\mu}^*A_{\lambda})^{-1}(1-A_{\mu}^*A_{\mu})^{1/2}.
\end{equation}
Since $A_{\mu}^*$ is of finite rank, $A_{\mu}^*A_{\lambda} \to 0$
in the norm topology. Hence
$\lim_{{\lambda}}\|M_{A_{\mu}}(-A_{\lambda})\| =
\lim_{{\lambda}}\|T_{\lambda}\|$, where
$$T_{\lambda} = (1-A_{\mu}A_{\mu}^*)^{-1/2}(A_{\mu}-A_{\lambda})(1-A_{\mu}^*A_{\mu})^{1/2} =
A_{\mu} - (1-A_{\mu}A_{\mu}^*)^{-1/2}A_{\lambda}(1-A_{\mu}^*A_{\mu})^{1/2}.$$

It follows from the identity
$$(1-t)^{-1/2} - 1 = \frac{t}{(1-t)(1+(1-t)^{-1/2})}$$
that the operator $(1-A_{\mu}A_{\mu}^*)^{-1/2}$ is a finite rank perturbation of the identity operator. Since
$A_{\lambda}\to 0$ in \WOT , we obtain that $\|T_{\lambda}-S_{\lambda}\|\to 0$, where $S_{\lambda} = A_{\mu} -
A_{\lambda}(1-A_{\mu}^*A_{\mu})^{1/2}$.

Denote  $A_{\lambda}(1-A_{\mu}^*A_{\mu})^{1/2}$ by $B_{\lambda}$. Since $B_{\lambda}\to 0$ in \WOT , the
sequence
$$(A_{\mu} - B_{\lambda})^*(A_{\mu}-B_{\lambda}) - A_{\mu}^*A_{\mu} - B_{\lambda}^*B_{\lambda} = - A_{\mu}^*B_{\lambda} -
B_{\lambda}^*A_{\mu}$$  tends to zero in norm topology. Furthermore,
$$B_{\lambda}^*B_{\lambda} = (1-Q)^{1/2}A_{\lambda}^*A_{\lambda}(1-Q)^{1/2} $$ tends in norm topology to
$(1-Q)^{1/2}P(1-Q)^{1/2}$. Therefore
$$(A_{\mu} -
B_{\lambda})^*(A_{\mu}-B_{\lambda}) \to Q + (1-Q)^{1/2}P(1-Q)^{1/2}.$$ Since $\|P-Q\| <\varepsilon $, we have
that
$$\|Q + (1-Q)^{1/2}P(1-Q)^{1/2} - (Q +(1-Q)Q)\| <\varepsilon.$$ The inequalities  $$\beta^2 - \varepsilon\le
\|Q\|\le \beta^2$$ imply $$\|Q + (1-Q)Q\| \ge 2\beta^2 - \beta^4 - 2\varepsilon,$$ whence
$$\lim_{\lambda}||S_{\lambda}^*S_{\lambda}||=\lim_{{\lambda}} \|(A_{\mu} -
B_{\lambda})^*(A_{\mu}-B_{\lambda})\|\ge 2\beta^2 - \beta^4 -
3\varepsilon > \beta^2,$$ the last inequality is satisfied if
$\varepsilon$ is sufficiently small.
\end{proof}

Since $(\B,\rho)$ has the midpoint property (Proposition \ref{middle}), all ball-convex sets in $\B$ are
mid-convex and we obtain
\begin{corollary}\label{normal}
The metric space $(\B,\rho)$ has normal structure.
\end{corollary}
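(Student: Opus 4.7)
The plan is to observe that every ball-convex subset of $\B$ is an $m$-set in the sense of Definition~\ref{m-set}, after which the statement follows by a direct appeal to Theorem~\ref{T:non-diam}.

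First I would check that ball-convex sets are mid-convex. Each individual $\rho$-ball $E_{a,r}$ is mid-convex: if $x,y \in E_{a,r}$ and $z \in m(x,y)$, then applying the defining inequality for a midpoint with $x$ replaced by $a$ yields $\rho(a,z) \le \frac{1}{2}(\rho(a,x) + \rho(a,y)) \le r$. Since mid-convexity is preserved under arbitrary intersections, and a ball-convex set is by definition an intersection of balls, every ball-convex subset of $\B$ is mid-convex. (This was already noted in the discussion following the definition of mid-convexity.)

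Next, I would invoke Corollary~\ref{ball-comp}(ii), which states that ball-convex subsets of $(\B,\rho)$ are \WOT-compact. Combining this with the previous paragraph, any ball-convex subset of $\B$ satisfies both clauses of Definition~\ref{m-set}, and is therefore an $m$-set. The conclusion is then immediate from Theorem~\ref{T:non-diam}: if a ball-convex set $M \subset \B$ has more than one element, then, viewed as an $m$-set, it must contain a non-diametral point, which is precisely what normal structure requires.

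There is no serious obstacle at this point; the heavy lifting was carried out in Theorem~\ref{T:non-diam} and in the establishment of \WOT-compactness of ball-convex sets. The corollary is, in effect, a bookkeeping step that records the inclusion \{ball-convex sets\} $\subseteq$ \{$m$-sets\} and transports the non-diametrality conclusion across it.
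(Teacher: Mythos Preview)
Your proposal is correct and follows essentially the same route as the paper: the paper's one-line justification observes (via Proposition~\ref{middle}) that ball-convex sets are mid-convex, implicitly combines this with the \WOT-compactness from Corollary~\ref{ball-comp}(ii) to place ball-convex sets among the $m$-sets, and then reads off the conclusion from Theorem~\ref{T:non-diam}. Your write-up spells out these same steps in more detail.
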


By Corollaries \ref{ball-comp}, \ref{normal}, and
Theorem~\ref{fp-gen}, we immediately get

\begin{theorem}\label{main}
Let $\dim K < \infty$. If a group $G$ of $\rho$-isometric maps of
$\B$  has at least one orbit separated from the boundary, then it
has a fixed point.
\end{theorem}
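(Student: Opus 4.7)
The plan is to recognize that at this point all the substantive work has already been done, so the proof is an assembly of earlier results via the abstract fixed point theorem (Theorem \ref{fp-gen}). That theorem says: if a metric space $(\X,d)$ is ball-compact and has normal structure, then any group of isometries with a bounded orbit has a fixed point. I would apply this with $(\X,d) = (\B,\rho)$ and with $G$ the given group.

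So the first step is to check the two structural hypotheses on $(\B,\rho)$. Ball-compactness is Corollary \ref{ball-comp}(i). Normal structure is Corollary \ref{normal}, which in turn rests on Proposition \ref{middle} (midpoint property of $(\B,\rho)$) together with Theorem \ref{T:non-diam}; the latter actually gives the stronger statement that every $m$-set, not merely every ball-convex set, has a non-diametral point unless it is a singleton. Both corollaries require $\dim K < \infty$ (for \WOT-compactness of balls via Lemma \ref{WOTballs}), which is exactly our hypothesis.

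Next I would translate the orbit hypothesis into the form required by Theorem \ref{fp-gen}. By assumption $G$ has an orbit $G(x_0)$ separated from the boundary of $\B$; by the observation at the start of Subsection 3.3, being separated from the boundary of $\B$ is equivalent to being contained in a $\rho$-ball with center $0$, hence to being $\rho$-bounded. Thus $G(x_0)$ is a $\rho$-bounded orbit of a group of $\rho$-isometries of the metric space $(\B,\rho)$.

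With all hypotheses of Theorem \ref{fp-gen} now verified, the conclusion follows directly: $G$ has a fixed point in $\B$. There is essentially no obstacle left; the real content of the argument is packed into Theorem \ref{T:non-diam} and the ball-compactness of $(\B,\rho)$, both of which we may cite. If I wanted to expose the inner mechanism rather than just quoting Theorem \ref{fp-gen}, I would retrace its short argument in this setting: take the family of closed $\rho$-balls containing $G(x_0)$, intersect to get a nonempty $G$-invariant ball-convex set $M_1$, apply Zorn's lemma (justified by Lemma \ref{fip-conv}) to the $G$-invariant ball-convex subsets of $M_1$ to get a minimal such $M$, and use Corollary \ref{normal} together with the existence of a non-diametral point to produce a proper $G$-invariant ball-convex subset of $M$, contradicting minimality unless $M$ is a singleton.
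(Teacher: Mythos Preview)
Your proposal is correct and matches the paper's own argument exactly: the paper derives Theorem \ref{main} in one line from Corollaries \ref{ball-comp} and \ref{normal} together with Theorem \ref{fp-gen}, precisely as you outline. (One minor quibble: ball-compactness in Corollary \ref{ball-comp}(i) does not actually require $\dim K < \infty$; only the normal-structure Corollary \ref{normal}, via Theorem \ref{T:non-diam}, uses that hypothesis.)
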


Since biholomorphic mappings of $\B$ are isometric with respect to
the Carath\'eo\-dory metric, we get also

\begin{corollary}\label{C:main}
Let $H$ and $K$ be complex, $\dim K < \infty$. If a group $G$ of biholomorphic automorphisms of $\B$ has at
least one orbit separated from the boundary, then it has a fixed point.
\end{corollary}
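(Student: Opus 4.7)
The plan is to reduce Corollary \ref{C:main} directly to Theorem \ref{main} by verifying that every biholomorphic automorphism of $\B$ is a $\rho$-isometry when the scalars are complex. Once this is in hand, the hypothesis on the orbit transfers verbatim, and Theorem \ref{main} produces the desired fixed point.

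First, I would recall the standard fact (used implicitly in Section \ref{S:distance}) that the Carath\'eodory pseudo-distance $c_{\B}$ is contracted by every holomorphic self-map of $\B$, and hence preserved by every biholomorphic automorphism: if $g:\B\to\B$ is biholomorphic, then for all $A,B\in\B$,
\begin{equation*}
c_{\B}(g(A),g(B))\le c_{\B}(A,B)\quad\text{and}\quad c_{\B}(A,B)=c_{\B}(g^{-1}g(A),g^{-1}g(B))\le c_{\B}(g(A),g(B)),
\end{equation*}
so equality holds. Next, I would invoke equation (\ref{dist}), which identifies $\rho$ with $c_{\B}$ in the complex case. Combining these two observations, every biholomorphic automorphism of $\B$ is an isometry of $(\B,\rho)$.

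Consequently, the group $G$ in the hypothesis is a group of $\rho$-isometries of $\B$. The assumption that $G$ has an orbit separated from the boundary of $\B$ is exactly the hypothesis of Theorem \ref{main} (since $\rho$-bounded sets and sets separated from the boundary coincide, as noted at the start of Section 3.3). Applying Theorem \ref{main} yields a point $X_0\in\B$ fixed by every element of $G$, which is the conclusion sought.

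There is essentially no obstacle here: the corollary is a direct specialization of Theorem \ref{main} once the identification $\rho=c_{\B}$ and the biholomorphic invariance of $c_{\B}$ are recorded. The only mild subtlety is to make sure the invariance of $c_{\B}$ under biholomorphisms is stated for the operator-ball setting (which is covered, e.g., by \cite[Theorem 4.1.8]{Vesent}), so that the passage from ``biholomorphic automorphism'' to ``$\rho$-isometry'' is genuinely automatic rather than requiring an additional argument specific to $\B$.
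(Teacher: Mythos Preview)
Your proposal is correct and follows exactly the paper's approach: the paper simply remarks that biholomorphic maps are isometries for the Carath\'eodory metric, which coincides with $\rho$ by \eqref{dist}, and then invokes Theorem~\ref{main}. The only quibble is that \cite[Theorem~4.1.8]{Vesent} is cited in the paper for the formula $c_{\B}(0,B)=\tanh^{-1}\|B\|$, not for biholomorphic invariance; the latter is a general property of the Carath\'eodory pseudodistance on any domain and needs no special reference for $\B$.
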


\section{Fixed points for commuting holomorphic maps on the operator ball}

We continue to suppose that $\dim K<\infty$. The spaces can be real or complex.

\begin{lemma}\label{invSet} Let $G$ be a commutative semigroup of $\rho$-nonexpansive maps of $\B$. If it has a
$\rho$-bounded orbit, then it has an invariant $m$-set.
\end{lemma}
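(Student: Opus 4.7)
The plan is to produce an invariant $m$-set by averaging the distance-to-orbit function with a left-invariant mean on $G$. Because every commutative semigroup is left-amenable, fix a left-invariant mean $\mu$ on $\ell^\infty(G)$. Let $x_0$ be a point whose $G$-orbit is $\rho$-bounded, set $D=\sup_{g\in G}\rho(x_0,g(x_0))<\infty$, and define
$$\phi(x)=\mu_g\bigl[\rho(x,g(x_0))\bigr],\qquad x\in\B.$$
For each $x$ the integrand $g\mapsto\rho(x,g(x_0))$ is bounded by $\rho(x,x_0)+D$, so $\phi$ is a real-valued, $1$-Lipschitz function on $(\B,\rho)$.

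I would then verify two properties of $\phi$. First, $\phi$ is non-increasing along the $G$-action: for $g_0\in G$, left-invariance of $\mu$ followed by $\rho$-nonexpansivity of $g_0$ gives
$$\phi(g_0x)=\mu_g\rho(g_0x,g(x_0))=\mu_g\rho(g_0x,g_0g(x_0))\le\mu_g\rho(x,g(x_0))=\phi(x).$$
Second, applying $\mu_g$ to the pointwise midpoint inequality $\rho(z,g(x_0))\le\tfrac12(\rho(x,g(x_0))+\rho(y,g(x_0)))$ yields $\phi(z)\le\tfrac12(\phi(x)+\phi(y))$ for each $z\in m(x,y)$. Hence every sublevel set $C_c=\{x\in\B:\phi(x)\le c\}$ is mid-convex and $G$-invariant. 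Choosing $c=\phi(x_0)$ (which is $\le D$) makes $C_c$ nonempty, and since $\phi(x)\ge\rho(x,x_0)-D$, $C_c$ is $\rho$-bounded, lying inside the WOT-compact ball $E_{x_0,c+D}$ (Lemma \ref{WOTballs}(i)).

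The remaining and main obstacle is to upgrade $C_c$ from $\rho$-bounded to actually WOT-compact, i.e., to show $C_c$ is WOT-closed. Each individual function $x\mapsto\rho(x,g(x_0))$ is WOT-lower-semi-continuous because its sublevel sets are $\rho$-balls, but an invariant-mean average of WOT-l.s.c.\ functions need not inherit lower-semi-continuity. To bypass this, I would replace the single invariant mean by finite F\o lner averages $\phi_{F_\alpha}(x)=|F_\alpha|^{-1}\sum_{g\in F_\alpha}\rho(x,g(x_0))$: each $\phi_{F_\alpha}$ is a finite sum of WOT-l.s.c.\ functions and hence WOT-l.s.c., and F\o lner-ness of $(F_\alpha)$ (available because $G$ is commutative, hence amenable) makes the corresponding sublevel sets asymptotically $G$-invariant; one then passes to the l.s.c.\ envelope of an appropriate limit to obtain a function with WOT-closed sublevel sets. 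As an alternative, one may pass directly to the WOT-closure $\overline{C_c}^{\mathrm{WOT}}$ and argue that mid-convexity and $G$-invariance transport across the closure by using the explicit Möbius-transformation description of midpoints from Lemma \ref{simm} together with the WOT-continuity of Möbius transformations on $\rho$-bounded sets when $\dim K<\infty$ (Lemma \ref{WOTballs}(iii)).
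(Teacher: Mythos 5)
Your construction gets the easy half right (mid-convexity, exact $G$-invariance, $\rho$-boundedness of the sublevel sets of the averaged function), but the step you yourself flag as ``the remaining and main obstacle'' is a genuine gap, and neither of your proposed patches closes it. The closure patch fails at the outset: the elements of $G$ are only assumed $\rho$-nonexpansive, not \WOT-continuous (the paper's remark after Theorem \ref{common} makes clear that \WOT-continuity is an extra hypothesis, and Lemma \ref{WOTballs}(iii) gives it only for M\"obius maps), so $G$-invariance does not transport to $\overline{C_c}^{\mathrm{WOT}}$; mid-convexity of the \WOT-closure is also unclear, since mid-convexity requires \emph{all} midpoints of pairs of limit points to lie in the set. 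The F\o lner patch is too vague to assess: an l.s.c.\ envelope of a limit of finite averages is only asymptotically invariant, and you give no mechanism for recovering an exactly invariant, nonempty, mid-convex, \WOT-compact set from it. There is a further structural problem with averaging at all: $\rho(\cdot,W)$ is quasi-convex (its sublevel sets are $\rho$-balls, convex by Lemma \ref{WOTballs}(i)) but not convex in the linear sense (already on the real diameter of the disc, $\rho(x,a)=|\tanh^{-1}x-\tanh^{-1}a|$ is concave on $[0,a]$), and means of quasi-convex functions need not be quasi-convex; so your $C_c$ is mid-convex but not visibly convex in the ordinary sense, which blocks the one cheap route to \WOT-compactness, namely Lemma \ref{WOTballs}(ii) (norm-closed convex sets are \WOT-compact when $\dim K<\infty$).

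The paper's proof avoids all of this by replacing the invariant mean with a $\limsup$ along the divisibility order on $G$ ($g\succ h$ iff $g=hf$; directedness is where commutativity enters). With $r=\diam G(X)$, the set
$$F=\Bigl\{Y:\ \limsup_{g\in(G,\succ)}\rho(Y,g(X))\le r\Bigr\}
=\bigcap_{\varepsilon>0}\,\bigcup_{g\in G}\,\bigcap_{h\succ g}E_{h(X),r+\varepsilon}$$
is built from balls by intersections and an \emph{increasing} (directed) union, so it is simultaneously mid-convex and convex in the usual linear sense; it is $\rho$-closed (the $\limsup$ function is $1$-Lipschitz), hence norm-closed by Lemma \ref{twomet}, hence \WOT-compact by Lemma \ref{WOTballs}(ii). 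Exact invariance then follows by the same re-indexing you used for the mean: $\limsup_{g}\rho(h(Y),g(X))=\limsup_{f}\rho(h(Y),hf(X))\le\limsup_{f}\rho(Y,f(X))\le r$. So the missing idea in your proposal is precisely this: use a $\limsup$ sublevel set, whose description as a ball combination hands you both convexity notions and compactness for free, rather than an averaged function whose sublevel sets have no such description.
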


\begin{proof} Let $X\in\B$ be an operator whose orbit $G(X)$ is $\rho$-bounded.
We introduce the following partial order on $G$: $g\succ h$ if and only if $g = hf$ for some $f\in G$. The
semigroup $G$ is a directed set with respect to this order (we use the standard definitions from \cite[Chapter
I]{DS58}).\medskip

Let $r=\diam(G(X))$ and let $F = \{Y\in \B: \limsup_{g\in (G,\succ)} \rho(Y,g(X)) \le r\}$. Then
$$F = \bigcap_{\varepsilon>0}\,\bigcup_{g\in G}\,\bigcap_{h\succ g}E_{h(X),r+\varepsilon}.$$

 Since each $\rho$-ball $E_{Z,p}$ is convex in the usual sense (see Lemma \ref{WOTballs}(i)) and is mid-convex, the same is
true for $F$. Also it is easy to see that $F$ is $\rho$-bounded and is closed in the topology defined by the
metric $\rho$. By Lemma \ref{twomet}, $F$ is norm-closed. By Lemma \ref{WOTballs}(ii), $F$ is WOT-compact, thus
$F$ is an $m$-set.

Let us check that $F$ is $G$-invariant. Indeed, if $Y\in F$ and
$h\in G$, then
\[\begin{split}\limsup_{g\in (G,\succ)} \rho(h(Y),g(X))& =
\limsup_{f\in (G,\succ)} \rho(h(Y),hf(X))\le \limsup_{f\in (G,\succ)} \rho(Y,f(X)) \le r.\end{split}\] This
means that $h(Y)\in F$.
\end{proof}

The idea of our proof of Lemma \ref{invSet} goes back to
\cite{Kir65}.

\begin{lemma}\label{oneMap}
 If a $\rho$-nonexpansive map $g:\B\to \B$  has an invariant bounded set $U$, then it has a fixed point in
$\B$.

Moreover, if  $E$ is an arbitrary invariant $m$-set containing
$U$, then a fixed point can be found in $E$.
\end{lemma}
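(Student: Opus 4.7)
The plan is to imitate the Brodskii--Milman/Kirk scheme already used in the proof of Theorem~\ref{fp-gen}, with $m$-sets playing the role of ball-convex sets and the cyclic semigroup $\{g^n\}_{n\ge 0}$ playing the role of the group. I first handle the ``Moreover'' statement. Given the invariant $m$-set $E\supset U$, let $\M$ be the collection of all non-empty $g$-invariant $m$-subsets of $E$. The intersection of a descending chain in $\M$ is non-empty by the finite intersection property applied to the WOT-compact family, mid-convex as an intersection of mid-convex sets, WOT-compact, and clearly $g$-invariant; so it lies in $\M$. Zorn's Lemma then furnishes a minimal element $M\in\M$.

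The crucial consequence of minimality is that $M$ coincides with the $m$-span $M_0$ of $g(M)$. Indeed, $M_0\subset M$ since $M$ is itself an $m$-set containing $g(M)$, and $g(M_0)\subset g(M)\subset M_0$ shows that $M_0$ is $g$-invariant; by minimality $M_0=M$. Now assume for contradiction that $\alpha:=\diam M>0$. By Theorem~\ref{T:non-diam}, $M$ has a non-diametral point, so there exists $\delta<\alpha$ such that
\[
C := M \cap \bigcap_{y \in M} E_{y, \delta}
\]
is non-empty. As the intersection of the $m$-set $M$ with the ball-convex set $\bigcap_{y\in M}E_{y,\delta}$, the set $C$ is itself an $m$-set, and it is \emph{properly} contained in $M$: any pair $a,b\in M$ with $\rho(a,b)>\delta$ satisfies $b\notin E_{a,\delta}\supset C$.

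The main obstacle, and the point where the fact that $g$ is only nonexpansive (rather than isometric) has to be circumvented, is to show that $C$ is $g$-invariant. For $x\in C$ and every $y\in M$, nonexpansivity gives $\rho(g(x),g(y))\le\rho(x,y)\le\delta$, so $g(M)\subset E_{g(x),\delta}$. Since the ball $E_{g(x),\delta}$ is itself an $m$-set, the $m$-span of $g(M)$ --- which was identified with $M$ in the previous paragraph --- must be contained in $E_{g(x),\delta}$. Therefore $\rho(g(x),y)\le\delta$ for every $y\in M$, i.e.\ $g(x)\in C$. The proper $g$-invariant $m$-subset $C\subsetneq M$ contradicts the minimality of $M$, forcing $M$ to be a singleton $\{p\}$; then $g(p)=p\in E$.

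Finally, for the first (unqualified) statement, the iterates $\{g^n\}_{n\ge 0}$ form a commutative semigroup of $\rho$-nonexpansive maps, and for any $X\in U$ the orbit $\{g^n(X)\}$ lies inside $U$ and is therefore $\rho$-bounded. Lemma~\ref{invSet} then produces some invariant $m$-set, to which the preceding argument applies and yields a fixed point of $g$ in $\B$.
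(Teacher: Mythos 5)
Your proof is correct and follows essentially the same route as the paper's: a minimal invariant $m$-set obtained by compactness/Zorn, the identification of that set with the $m$-span of its image under $g$, and the non-diametral point from Theorem~\ref{T:non-diam} producing a proper invariant $m$-subset, contradicting minimality. The only difference is organizational (you establish the ``Moreover'' part directly inside $E$ and then deduce the first claim via Lemma~\ref{invSet}, while the paper argues in the reverse order), which does not change the substance.
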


\begin{proof} The assumption is equivalent to the condition that the semigroup generated by $g$
has a bounded orbit. By Lemma \ref{invSet}, there is an invariant
$m$-set for $g$. Since $m$-sets are \WOT-compact, there exists a
minimal invariant $m$-set $F$ for $g$. The $m$-span $F_1$ of
$g(F)$ is contained in $F$ and is invariant (indeed $g(F_1)\subset
g(F)\subset F_1$). Hence $F_1 = F$. If $F$ is a singleton, then it
is a fixed point.

Now assume that $F$ is not a singleton. By Theorem \ref{T:non-diam}, there is $r< \diam(F)$ such that the set
$$F(r):=\{Y\in F:~\forall Z\in F~ \rho(Y,Z)\le r \}$$ is non-empty. Since $F(r)$ is the intersection of $F$ with
a family of balls, it is an $m$-set. Moreover $F(r)$ is invariant. Indeed, if $Y\in F(r)$, then $\rho(g(Y),g(X))
\le r$ for all $X\in F$. Hence the ball $E_{g(Y),r}$ contains $g(F)$. Since $F$ is the $m$-span of $g(F)$, this
ball contains $F$. Hence $g(Y)\in F(r)$. It is also clear that $F(r)$ is a proper subset of $F$. We get a
contradiction with the minimality of $F$.

The second statement of the lemma follows from the construction.
\end{proof}

Using Lemma \ref{oneMap} we prove the following analogue of
Shafrir's theorem \cite[Theorem 3.2]{Sha92} for $\B$:

\begin{theorem}\label{common}
A finite set $\E$ of commuting $\rho$-nonexpansive maps of $\B$
has a fixed point in $\B$ if and only if $\E$ has an invariant set
separated from the boundary of $\B$.
\end{theorem}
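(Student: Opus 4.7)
The ``only if'' direction is immediate: a common fixed point of $\E$ is itself an invariant singleton in the open ball $\B$, hence separated from the boundary.

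For the ``if'' direction I proceed by induction on $n=|\E|$, strengthening the statement to: if $\E$ has an invariant $\rho$-bounded set $U$ and $E$ is any $\E$-invariant $m$-set containing $U$, then $\E$ has a common fixed point in $E$. The base case $n=1$ is the ``moreover'' clause of Lemma~\ref{oneMap}.

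For the inductive step, write $\E=\E'\cup\{g_n\}$ with $|\E'|=n-1$. By Lemma~\ref{invSet} applied to the commutative semigroup generated by $\E$, pick an $\E$-invariant $m$-set $E_0\subset E$ containing $U$; the inductive hypothesis applied to $\E'$ on $E_0$ yields $x^{*}\in E_0$ with $g_i(x^{*})=x^{*}$ for $i<n$. By commutativity each iterate $g_n^k(x^{*})$ remains fixed by $\E'$, so $\Omega=\{g_n^k(x^{*}):k\ge 0\}$ lies in $\bigcap_{i<n}\mathrm{Fix}(g_i)$, is $g_n$-invariant, and is $\rho$-bounded. Form the limsup set
\[
E_1=\Bigl\{Y\in E_0:\limsup_k \rho(Y,g_n^k(x^{*}))\le \diam(\Omega)\Bigr\}
\]
as in the proof of Lemma~\ref{invSet}. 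For $i<n$ the identity $g_i(g_n^k(x^{*}))=g_n^k(x^{*})$ combined with nonexpansiveness gives
\[
\rho(g_i(Y),g_n^k(x^{*}))=\rho(g_i(Y),g_i(g_n^k(x^{*})))\le \rho(Y,g_n^k(x^{*})),
\]
so $E_1$ is $g_i$-invariant, and a parallel argument handles $g_n$; thus $E_1$ is an $\E$-invariant $m$-set.

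By Zorn's lemma applied to the class of $\E$-invariant $m$-subsets of $E_1$ (which is stable under intersections of chains by \WOT-compactness of $m$-sets), select a minimal $\E$-invariant $m$-set $F\subset E_1$. Minimality forces $F=m\text{-span}(\bigcup_{i\le n}g_i(F))$, since this $m$-span is contained in $F$ and is itself $\E$-invariant by the computation $g_j(\bigcup_i g_i(F))=\bigcup_i g_i(g_j(F))\subset\bigcup_i g_i(F)\subset F$. Theorem~\ref{T:non-diam} would then finish the proof if one could show that a non-diametral point of $F$ produces a proper $\E$-invariant $m$-subset $F(r)\subsetneq F$, contradicting minimality and forcing $F$ to be a singleton -- the desired common fixed point.

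The main obstacle is precisely the $\E$-invariance of the candidate $F(r)$: for $y\in F(r)$ and $g_j\in\E$, nonexpansiveness gives $\rho(g_j(y),g_j(z))\le r$ for every $z\in F$, so $g_j(F)\subset E_{g_j(y),r}$; but to conclude $g_j(y)\in F(r)$ one needs $F\subset E_{g_j(y),r}$, and this requires controlling the cross-terms $\rho(g_j(y),g_i(z))$ for $i\ne j$, which are not directly bounded by nonexpansiveness. Closing this gap is the technical heart of the argument; it is secured by combining the structural identity $F=m\text{-span}(\bigcup_i g_i(F))$ with the fact that $F\subset E_1$ carries the ``$\E'$-fixed'' coordinate information of the orbit $\Omega$, so that for $i<n$ the cross-terms reduce via commutativity to the diagonal estimates already controlled by nonexpansiveness.
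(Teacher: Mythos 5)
Your overall architecture (invariant $m$-set via Lemma~\ref{invSet}, minimal invariant $m$-set by Zorn, non-diametral point via Theorem~\ref{T:non-diam}, contradiction with minimality) matches the paper's, and your ``only if'' direction and base case are fine. But the proof is not complete: you yourself isolate the decisive step --- the $\E$-invariance of the Chebyshev-type set $F(r)$ --- and then assert that it ``is secured by'' the identity $F=m\text{-span}(\bigcup_i g_i(F))$ together with the fact that $F$ sits inside your limsup set $E_1$. That is not an argument. Points of $E_1$ are merely close to the orbit $\Omega$ in the limsup sense; they are not fixed by $g_1,\dots,g_{n-1}$, so the minimal $m$-set $F\subset E_1$ carries no usable ``$\E'$-fixed coordinate information,'' and the cross-terms $\rho(g_j(y),g_i(z))$ for $i\ne j$ remain uncontrolled. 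Note also that the natural repair of your induction --- restricting $g_n$ to $\bigcap_{i<n}\mathrm{Fix}(g_i)$ --- is blocked because fixed-point sets of $\rho$-nonexpansive maps are not known to be mid-convex or \WOT-compact (the paper's closing remark about \WOT-continuous families exists precisely because of this), so Lemma~\ref{oneMap} cannot be applied on them.

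The missing idea in the paper is different and specifically designed to kill the cross-terms. One applies Lemma~\ref{oneMap} to the single composite map $g=g_1g_2\cdots g_n$ inside a minimal $\E$-invariant $m$-set $F$, and lets $R$ be the set of fixed points of $g$ in $F$. Commutativity gives not just $g_i(R)\subset R$ but $g_i(R)=R$ (each $X\in R$ equals $g_i(h(X))$ with $h$ the product of the remaining maps and $h(X)\in R$). Taking $K$ to be the $m$-span of $R$, the key observation is that the set $K_r=\{X\in K:\rho(X,Y)\le r\ \forall\,Y\in K\}$ admits the alternative description $\{X\in K:\rho(X,Y)\le r\ \forall\,Y\in R\}$, because any ball $E_{X,r}$ containing $R$ is an $m$-set and hence contains $K$. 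Invariance of $K_r$ then follows from the surjectivity $g_i(R)=R$: for $X\in K_r$ and $Y\in R$ write $Y=g_i(Y')$ with $Y'\in R$ and estimate $\rho(g_i(X),Y)=\rho(g_i(X),g_i(Y'))\le\rho(X,Y')\le r$. This is exactly the mechanism that replaces the cross-term control you could not supply; without it (or an equivalent device) your induction does not close.
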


\begin{proof} The ``only if'' part is trivial, we prove the ``if''
part only. Recall that a set is separated from the boundary if and only if it is $\rho$-bounded. If a set $\E =
\{g_1,...,g_n\}$ has a $\rho$-bounded invariant set, then the semigroup generated by $\E$ has a bounded orbit.
By Lemma \ref{invSet}, there is an invariant $m$-set for $\E$. Let $F$ be a minimal invariant $m$-set for $\E$.
\medskip

Let $g = g_1g_2...g_n$. By Lemma \ref{oneMap}, $g$ has a fixed point in $F$.
  Let $R$ be the set of all fixed
points of $g$ in $F$.

We claim that $g_i(R)= R$ for each $i$. The inclusion $g_i(R) \subset R$ follows from commutativity. On the
other hand, if $X\in R$, then $X = g(X) = g_i(Y)$, where $Y = h(X)$ and $h$ is the product of all $g_j$ with
$j\neq i$. Clearly $Y\in R$, $X\in g_i(R)$, thus, $R\subset g_i(R)$.
\medskip

Let $K$ be the smallest $m$-set containing $R$, clearly $K\subset
F$. If $K$ is a singleton, then $R=K$ contains only one point
which is a common fixed point for $g_1,\dots,g_n$.

If $K$ is not a singleton, then, by Theorem \ref{T:non-diam},
there is $r<\diam(K)$ such that the set $K_r = \{X\in K:
\rho(X,Y)\le r, \text{ for all }Y\in K\}$ is non-empty. Since
$K_r$ is an intersection of an $m$-set with a family of
$\rho$-balls, it is also an $m$-set.
\medskip

Let us show that $K_r$ is invariant under all $g_i$, $i=1,\dots,n$. First we note that $K_r = \{X\in K:
\rho(X,Y)\le r, \text{ for all }Y\in R\}$. Indeed, if $\rho(X,Y)\le r$ for all $Y\in R$, then $R\subset E_{X,r}$
whence $K\subset E_{X,r}$ because $E_{X,r}$ is an $m$-set. Now if $X\in K_r$, $Y\in R$, using the fact that
$g_i(R) = R$, we obtain $\rho(g_i(X),Y) = \rho(g_i(X),g_i(Y^{\prime})) \le \rho(X,Y^{\prime}) \le r$. By
minimality of $F$, we get $K_r = F$. We get a contradiction since $K\subset F$ and $K_r$ is a proper subset of
$K$.
\end{proof}

Note that Theorem \ref{common} extends to infinite families of
$\rho$-nonexpansive maps if these maps are assumed to be
\WOT-continuous (indeed in this case the set of all fixed points
for each finite family of maps is \WOT-compact).
\medskip

Since holomorphic maps are $\rho$-nonexpansive, Theorem
\ref{common} has the following corollary.

\begin{corollary}
Let $K$ and $H$ be complex spaces. A finite set $\E$ of commuting
holomorphic maps of $\B$ has a common fixed point in $\B$ if and
only if $\E$ has an invariant set separated from the boundary of
$\B$.
\end{corollary}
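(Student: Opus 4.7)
The plan is to deduce this corollary directly from Theorem~\ref{common} by verifying that, in the complex setting, the holomorphy hypothesis implies the $\rho$-nonexpansiveness hypothesis on which Theorem~\ref{common} rests.

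The ``only if'' part is immediate: a common fixed point $X_0\in\B$ provides the singleton invariant set $\{X_0\}$, which is automatically separated from the boundary since $\|X_0\|<1$. For the ``if'' direction, suppose $\E$ is a finite set of commuting holomorphic self-maps of $\B$ possessing an invariant set separated from the boundary. I would first recall that, when $K$ and $H$ are complex, the distance $\rho$ introduced in Section~\ref{S:distance} coincides with the Carath\'eodory metric $c_\B$ of $\B$; this identification is the content of formula~(\ref{dist}). The classical Lempert--Schwarz--Pick principle (or simply the defining extremal property of $c_\B$) then gives that every holomorphic self-map of $\B$ is non-increasing with respect to $c_\B=\rho$, i.e.\ $\rho$-nonexpansive. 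Since holomorphy is obviously preserved under composition, each element of the finite commutative family $\E$ is $\rho$-nonexpansive.

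With this in hand, the hypothesis of Theorem~\ref{common} is satisfied: $\E$ is a finite commuting set of $\rho$-nonexpansive maps of $\B$ having an invariant set separated from the boundary. Theorem~\ref{common} therefore yields a common fixed point of $\E$ in $\B$, which is exactly what the corollary claims.

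There is no serious obstacle here: the only point worth stressing is that the complex-scalar hypothesis in the corollary is used precisely to identify $\rho$ with $c_\B$, so that the standard invariance principle for the Carath\'eodory metric becomes available; once that identification is in place, the statement is a direct specialization of Theorem~\ref{common}.
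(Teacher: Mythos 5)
Your proposal is correct and is essentially the paper's own argument: the paper deduces the corollary from Theorem~\ref{common} by noting that holomorphic self-maps of $\B$ are $\rho$-nonexpansive, which in the complex case is exactly the non-increasing property of the Carath\'eodory metric $c_\B=\rho$ established via formula~(\ref{dist}).
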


\section{Operator-theoretic applications: the case of real
spa\-ces}\label{S:oper}

For applications to invariant subspaces and orthogonalization, we
need to prove that $\rho$ is invariant under maps on $\B$ which
are induced by $J$-unitary operators of the space $\cH = H\oplus
K$. In the case of complex scalars this follows immediately from
the fact that these maps are holomorphic, while in the real case
we need some additional arguments.

We remind some definitions and constructions from the theory of
spaces with an indefinite product. We refer to \cite{AI89} or
\cite{KS97} for more information.

An operator $U$ on $\cH$ is called {\it $J$-unitary} if it preserves the indefinite scalar product
\begin{equation}\label{indscalprod}
[x,y] = (P_Hx,y) - (P_Kx,y),
\end{equation}
 where $P_H$ and $P_K$ are the projections on the summands in the
decomposition $\cH = H\oplus K$. To each $J$-unitary operator $U$ on $\cH$ there corresponds a map $w_U$ of $\B$
in such a way that
\begin{equation}\label{comp}
w_{U_1U_2} = w_{U_1}\circ w_{U_2}.
\end{equation}
To see this recall that a vector $x\in \cH$ is called {\it
positive} ({\it neutral, negative}) if $[x,x] > 0$ ($[x,x] = 0$,
$[x,x]  < 0$, respectively). A subspace of $\cH$ is called {\it
positive} ({\it neutral, negative}) if all its non-zero elements
are positive (neutral, negative, respectively). For each operator
$X\in \B$ the set $$S(X) = \{Xx\oplus x: x\in K\}$$ is a negative
subspace of $\cH$. Since $\dim(S(X)) = \dim(K)$, $S(X)$ is a
maximal negative subspace in $\cH$. Indeed, if some subspace $M$
of $\cH$ strictly contains $S(X)$, then its dimension is greater
than the codimension of $H$, whence $M\cap H \neq \{0\}$. But all
non-zero vectors in $H$ are positive.

Conversely, each maximal negative subspace $Q$ of $\cH$ coincides
with $S(X)$, for some $X\in \B$. Indeed, since $Q\cap H = \{0\}$,
there is an operator $X: K\to H$ such that each vector of $Q$ is
of the form $Xx\oplus x$. Since $Q$ is negative, we have
$[Xx\oplus x, Xx\oplus x]=\|Xx\|^2-\|x\|^2<0$. Since $K$ is finite
dimensional, this implies $\|X\| < 1$, so $X\in \B$. Thus
$Q\subset S(X)$; and, by maximality, $Q = S(X)$.

It is easy to see that the map $X\to S(X)$ from $\B$ to the set $\E$ of all maximal negative subspaces is
injective and therefore bijective.

Now we can define $w_U$. Note that if a subspace $L$ of $\cH$ is
maximal negative, then its image $UL$ under $U$ is also maximal
negative (because $U$ is invertible and preserves
$[\cdot,\cdot]$). Hence, for each $X\in \B$, there is $Y\in \B$
such that $S(Y) = US(X)$. We let $w_U(X)=Y$.

The equality (\ref{comp}) follows easily because
$$S(w_{U_1}(w_{U_2}(X))) = U_1S(w_{U_2}(X)) = U_1U_2S(X) =
S(w_{U_1U_2}(X))$$  and the map $X\to S(X)$ is injective.

Let $U = (U_{ij})_{i,j=1}^2 $ be the matrix of $U$ with respect to the decomposition $\cH = H\oplus K$. Then
$U(Xx\oplus x) = (U_{11}Xx + U_{12}x)\oplus(U_{21}Xx + U_{22}x)$. Since $U(Xx\oplus x) \in S(w_U(X))$, we
conclude that
$$w_U(X)(U_{21}Xx + U_{22}x) = U_{11}Xx + U_{12}x.$$
Thus
\begin{equation}\label{fraclin}
w_U(X) = (U_{11}X + U_{12})(U_{21}X + U_{22})^{-1}.
\end{equation}
Let us denote by $\T$ the group of all such transformations of
$\B$ (Helton \cite{Helt} calls them {\it general symplectic
transformations}). It should be noted that all M\"obius maps
belong to $\T$. Namely $M_A = w_U$ where $U$ is the $J$-unitary
operator with the matrix $(U_{ij})_{i,j=1}^2$, where
\begin{eqnarray*}
U_{11} =
(1_H-AA^*)^{-1/2}, U_{12} = - A(1_K-A^*A)^{-1/2},\\ U_{21} =
-A^*(1_H-AA^*)^{-1/2}, U_{22} = (1_K-A^*A)^{-1/2}.
\end{eqnarray*}
If a map $\varphi\in \T$ has the property $\varphi(0) = 0$, then it can be written in the form $\varphi(X) =
V_1XV_2$ where $V_1,V_2$ are unitary operators in $H$ and $K$, respectively. Indeed, let $U$ be a $J$-unitary
operator with $\varphi = w_U$, then $UK = US(0) = S(\varphi(0)) = S(0) = K$. Since $H$ is a
$[\cdot,\cdot]$-orthogonal complement of $K$ in $\cH$, it is also invariant under $U$. Thus $U$ has a diagonal
matrix with respect to the decomposition $\cH = H\oplus K$: $U_{12} = 0$, $U_{21}= 0$. A moment reflection shows
that $U_{11}$ and $U_{22}$ are unitary operators in $H$ and $K$, respectively. Thus $\varphi(X) =
U_{11}XU_{22}^{-1}$ is of the needed form.

It follows now that $\|\varphi(X)\| = \|X\|$ for all $X\in \B$, if $\varphi(0) = 0$.

If $\varphi\in \T$ sends $A$ to $0$, then the transformation
$\varphi_1 = \varphi\circ M_A$ preserves $0$ and sends $M_{-A}(B)$
to $\varphi(B)$. Thus, by the previous statement,  $\|\varphi(B)\|
= \|M_{-A}(B)\|$ and $\rho(A,B) = \tanh^{-1}\|\varphi(B)\| =
\rho(0,\varphi(B))$ for each $\varphi\in \T$ with $\varphi(A) =
0$.

This implies that $\rho(\varphi(A),\varphi(B)) = \rho(A,B)$ for
all $A,B\in \B$ and $\varphi\in \T$.  Indeed, let $\alpha\in \T$
with $\alpha(A) = 0$, then $\rho(A,B) = \rho(0,\alpha(B))$.
Setting $\psi = \alpha\circ \varphi^{-1}$ we get that
$\psi(\varphi(A)) = 0$ so $$\rho(\varphi(A),\varphi(B)) =
\rho(0,\psi(\varphi(B))) = \rho(0,\alpha(B)) = \rho(A,B).$$

Thus $\T$ is a group of isometries of $(\B,\rho)$. Applying
Theorem \ref{main} we obtain

\begin{corollary}\label{fixed-real}
If a group $G$ of general symplectic transformations  of $\B$ has an orbit separated from the boundary of $\B$,
then it has a fixed point in $\B$.
\end{corollary}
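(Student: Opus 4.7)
The plan is to observe that Corollary~\ref{fixed-real} is an immediate consequence of Theorem~\ref{main}, once one notes that the group $G$ of general symplectic transformations in question consists of $\rho$-isometries of $\B$. All the genuine work has already been carried out in the paragraphs preceding the corollary, where it is shown step by step that every $\varphi \in \T$ satisfies $\rho(\varphi(A),\varphi(B)) = \rho(A,B)$ for all $A,B \in \B$. So the real question is just a verification that the hypotheses of Theorem~\ref{main} are met.

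First I would recall the standing assumption $\dim K < \infty$, which is in force throughout the section and is what is needed to apply Theorem~\ref{main}. Next I would invoke the remark, proved just before the statement of the corollary, that $\T$ is a group of isometries of $(\B, \rho)$; since $G \subseteq \T$, the subgroup $G$ consists of $\rho$-isometries of $\B$. Then I would translate the hypothesis ``orbit separated from the boundary of $\B$'' into ``$\rho$-bounded orbit'', using the observation made in Section~3.3 that a subset of $\B$ is $\rho$-bounded if and only if it is contained in some $r\B$ with $r < 1$, i.e.\ separated from the boundary.

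With these two points in hand, Theorem~\ref{main} applies verbatim to $G$ acting on $(\B, \rho)$, yielding a common fixed point in $\B$. No new obstacle appears; the corollary is essentially a repackaging of Theorem~\ref{main} in the language of $J$-unitary operators. If anything, the ``hard part'' is already behind us: it was the verification, just above, that symplectic transformations that fix $0$ act as $X \mapsto V_1 X V_2$ with $V_1,V_2$ unitary, so that $\|\varphi(X)\| = \|X\|$, which then propagates to $\rho$-isometry for all of $\T$ via composition with M\"obius maps. Given that, the proof of the corollary itself is a one-line reduction to Theorem~\ref{main}.
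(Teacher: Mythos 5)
Your proposal is correct and matches the paper exactly: the corollary is stated immediately after the paragraph establishing that $\T$ is a group of isometries of $(\B,\rho)$, and the paper's entire proof is the phrase ``Applying Theorem \ref{main} we obtain'' the result. Your additional remarks about $\dim K<\infty$ and the equivalence of $\rho$-boundedness with separation from the boundary are accurate and consistent with what the paper relies on implicitly.
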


Recall that subspaces $M,N$ of a space with indefinite scalar product $\cH$ form {\it a dual pair of subspaces}
if $M$ is positive, $N$ is negative and $M+N = \cH$.

A {\it Pontryagin space} $\Pi_k$ is the space $\cH = H\oplus K$
with the scalar product (\ref{indscalprod}), where $H$ and $K$ are
Hilbert spaces, $\dim H = \infty$, $\dim K = k$.
\begin{corollary}\label{dualpair-real}
Any bounded group of $J$-unitary operators in a real Pontryagin
space $\Pi_k$ has an invariant dual pair of subspaces.
\end{corollary}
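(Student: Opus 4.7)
My plan is to reduce the statement to the existence of a common fixed point of the induced action of $G$ on the operator ball $\B$, and then produce the positive half of the desired dual pair as a $J$-orthogonal complement of the invariant maximal negative subspace that the fixed point provides.

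Set $C := \sup_{U \in G} \|U\| < \infty$. To each $U \in G$ I associate the general symplectic map $w_U \in \T$ of formula (\ref{fraclin}); by (\ref{comp}), $\{w_U : U \in G\}$ is a group acting on $\B$ by $\rho$-isometries. The key quantitative step is to show that the orbit $\{w_U(0) : U \in G\}$ is separated from the boundary. Writing the $J$-isometry identity $[Ux, Ux] = [x, x]$ for a vector $x = 0 \oplus k$ in $K$ yields $\|U_{22} k\|^2 - \|U_{12} k\|^2 = \|k\|^2$, so $U_{22}$ is bounded below by $1$; since $K$ is finite-dimensional, $U_{22}$ is invertible with $\|U_{22}^{-1}\| \le 1$. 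Setting $y = U_{22} k$ and using $\|U_{22}^{-1} y\| \ge \|y\|/C$ gives
\begin{equation*}
\|w_U(0) y\|^2 = \|U_{12} U_{22}^{-1} y\|^2 = \|y\|^2 - \|U_{22}^{-1} y\|^2 \le (1 - 1/C^2)\|y\|^2,
\end{equation*}
uniformly in $U \in G$. Thus $\{w_U(0) : U \in G\}$ lies in a fixed closed proper multiple of $\B$, and Corollary \ref{fixed-real} produces an $X_0 \in \B$ fixed by every $w_U$.

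Next I assemble the dual pair. The maximal negative subspace $N := S(X_0)$ is $G$-invariant, since $U S(X_0) = S(w_U(X_0)) = S(X_0)$. Let $M := N^{[\perp]}$; because each $U \in G$ preserves both $[\cdot, \cdot]$ and $N$, it preserves $M$ as well. Using the description $S(X_0) = \{X_0 k \oplus k : k \in K\}$, a short calculation identifies $M = \{h \oplus X_0^* h : h \in H\}$, and for $h \ne 0$ one obtains
\begin{equation*}
[h \oplus X_0^* h,\, h \oplus X_0^* h] = \|h\|^2 - \|X_0^* h\|^2 \ge (1 - \|X_0\|^2)\|h\|^2 > 0,
\end{equation*}
so $M$ is uniformly positive. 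The invertibility of $1 - X_0^* X_0$ (guaranteed by $\|X_0\| < 1$) lets one solve the resulting pair of linear equations for an arbitrary element of $\cH$ and yields $M + N = \cH$, so $(M, N)$ is the required $G$-invariant dual pair.

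The main obstacle I anticipate is the quantitative step above: translating a bare operator-norm bound on $G$ into uniform $\rho$-boundedness of an orbit in $\B$ is the only place the boundedness hypothesis is used, and it depends on the rigid $J$-isometry identity restricted to the finite-dimensional negative part $K$. The rest of the argument is structural and flows automatically from the correspondence $X \leftrightarrow S(X)$ and the geometry of $\Pi_k$.
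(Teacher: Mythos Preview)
Your proof is correct and follows essentially the same route as the paper: reduce to a fixed point of the induced action on $\B$ via Corollary~\ref{fixed-real}, then take $N=S(X_0)$ and $M=N^{[\perp]}$. The only notable difference is in the quantitative step: the paper argues by writing $Xx\oplus x = U(0\oplus y)$ and combining $\|Xx\|^2-\|x\|^2=-\|y\|^2$ with $\|Xx\|^2+\|x\|^2\le C^2\|y\|^2$ to obtain the sharper bound $\|X\|\le\sqrt{(C^2-1)/(C^2+1)}$, whereas your matrix-entry computation gives $\|w_U(0)\|\le\sqrt{1-1/C^2}$; both suffice. You also supply the explicit verification that $M=\{h\oplus X_0^*h:h\in H\}$ is positive and that $M+N=\cH$, which the paper leaves as ``easy to see''.
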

\begin{proof}
Let $\Gamma$ be a bounded group of $J$-unitary operators in $\cH$,
$C = \sup_{U\in \Gamma}\|U\|$ and $G = \{w_U: U\in \Gamma\}$. Then
the orbit $G(0)$ is separated from the boundary in $\B$. Indeed,
let $X = w_U(0)$, for some $U\in \Gamma$. For each $x\in K$, the
vector $Xx\oplus x$ can be written as $U(0\oplus y)$ where $y\in
K$. Hence $\|Xx\oplus x\|^2 \le C^2\|y\|^2$ and $[Xx\oplus x,
Xx\oplus x] = [0\oplus y,0\oplus y]$. Thus $\|Xx\|^2 -\|x\|^2 =
-\|y\|^2$ and $\|Xx\|^2 +\|x\|^2 \le C^2\|y\|^2$. It follows that
$\|Xx\|^2 \le r^2 \|x\|^2$ where $r = \sqrt{\frac{C^2-1}{C^2+1}} <
1$.  Thus $\|X\| \le r$ for all $X\in G(0)$.

By Corollary \ref{fixed-real}, $G$ has a fixed point $X\in \B$. Hence the maximal
negative space $N = S(X)$ is
invariant under $\Gamma$. Denoting by $M$ the $[\cdot,\cdot]$-orthogonal complement of $N$ it is easy to see
that $(M,N)$ is an invariant dual pair of subspaces.
\end{proof}

Now we turn to the problem of orthogonalization of a representation of a group on a Hilbert space (the standard
reference of the topic is the book \cite{Pis01}). As for representations in complex Hilbert spaces (see
\cite{OST}), we say that a representation $\pi$ of a group $\Gamma$ in a real Hilbert space $\cH$ preserves a
sesquilinear form with finite number of negative squares if $\cH$ can be decomposed into the orthogonal sums of
subspaces $H$ and $K$ (with $\dim K < \infty$) in such a way that $(P_H\pi(g)x,\pi(g)y) - (P_K\pi(g)x,\pi(g)y) =
(P_Hx,y) -(P_Kx,y)$ for all $x,y\in \cH$ and all $g\in G$.

\begin{corollary}\label{real-orthog}
If a bounded representation of a group $\Gamma$ by operators in a real Hilbert space $\cH$ preserves a
sesquilinear form with finite number of negative squares, then it is similar to an orthogonal representation of
$\Gamma$ on $\cH$.
\end{corollary}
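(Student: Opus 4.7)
\medskip

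The plan is to deduce the corollary from Corollary \ref{dualpair-real} essentially by constructing an equivalent Hilbert inner product on $\cH$ with respect to which the representation becomes orthogonal; this is the real analogue of the standard Pontryagin-space unitarization argument used in \cite{OST} for the complex case.

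First I would verify that the hypothesis places us exactly in the setting of Corollary \ref{dualpair-real}. Since $\pi$ preserves the form $[x,y]=(P_Hx,y)-(P_Kx,y)$ with $\dim K<\infty$, each $\pi(g)$ is a $J$-unitary operator on the real Pontryagin space $\cH=H\oplus K=\Pi_k$, and by assumption the group $\pi(\Gamma)$ is uniformly bounded in operator norm. Applying Corollary \ref{dualpair-real} yields a $\pi$-invariant dual pair $(M,N)$ of subspaces of $\cH$, where $M$ is positive, $N$ is negative, and $M+N=\cH$.

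Next, I would build the new inner product. Let $Q_M$ and $Q_N$ denote the projections of $\cH$ onto $M$ and $N$ determined by the (topological) direct sum decomposition $\cH=M\oplus N$; these are bounded operators, and they are $\pi$-invariant, i.e.\ $\pi(g)Q_M=Q_M\pi(g)$ and $\pi(g)Q_N=Q_N\pi(g)$ for every $g\in\Gamma$, because $M$ and $N$ are $\pi$-invariant. Set
\[
\langle x,y\rangle:=[Q_Mx,Q_My]-[Q_Nx,Q_Ny].
\]
Because $[\cdot,\cdot]$ is strictly positive on $M$ and strictly negative on $N$, this is a positive definite symmetric bilinear form. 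Using that $\pi(g)$ preserves $[\cdot,\cdot]$ and commutes with $Q_M$ and $Q_N$, one computes directly that $\langle\pi(g)x,\pi(g)y\rangle=\langle x,y\rangle$ for every $g\in\Gamma$, so $\pi$ is $\langle\cdot,\cdot\rangle$-orthogonal.

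The main obstacle is showing that $\langle\cdot,\cdot\rangle$ is equivalent to the original inner product $(\cdot,\cdot)$, for only then does the existence of an invertible operator $S$ with $\langle x,y\rangle=(Sx,Sy)$ give the similarity $g\mapsto S\pi(g)S^{-1}$ to an orthogonal representation. For this I would exploit the Pontryagin-space structure: $N$ is finite-dimensional (hence $(\cdot,\cdot)$ and $-[\cdot,\cdot]$ are equivalent on $N$), while $M=N^{[\perp]}$ is a closed subspace on which $[\cdot,\cdot]$ is positive definite and equivalent to $(\cdot,\cdot)$ (the standard fact that in $\Pi_k$ a maximal positive subspace is automatically uniformly positive, since its orthogonal complement is finite-dimensional negative). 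Combined with the boundedness of $Q_M$ and $Q_N$, this gives two-sided bounds $c\|x\|^2\le\langle x,x\rangle\le C\|x\|^2$, completing the similarity.
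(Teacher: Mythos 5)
Your argument is correct, and it shares the paper's first step (reduce to Corollary \ref{dualpair-real} and extract an invariant dual pair $(M,N)$), but it finishes differently. The paper does not renorm: it uses the fact that every maximal negative subspace is the image of $K$ under a $J$-unitary operator (e.g.\ the Potapov/M\"obius operator implementing $M_X$ when $N=S(X)$), writes $N=UK$, and observes that $\tau(g)=U^{-1}\pi(g)U$ is $J$-unitary with $K$ (hence also $H$) invariant, so each $\tau(g)$ is block-diagonal and therefore preserves the original Hilbert inner product; the similarity is thus implemented directly by the explicit bounded invertible $U$, with no norm-equivalence estimates needed. Your route instead builds the equivalent inner product $\langle x,y\rangle=[Q_Mx,Q_My]-[Q_Nx,Q_Ny]$ and then extracts $S$; this works, but it makes you pay for two extra verifications: that $\cH=M\dotplus N$ is a topological direct sum with bounded projections (true, since $N$ is finite-dimensional and $M$ is closed with $M\cap N=\{0\}$), and that $M$ is \emph{uniformly} positive. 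On the latter point your stated reason (``maximal positive, since its orthogonal complement is finite-dimensional negative'') is a bit loose: the actual mechanism is that the angular operator of a closed positive subspace complementary to a finite-dimensional negative one has finite rank, so its norm is attained and is strictly less than $1$ (the same compactness argument the paper uses to show maximal negative subspaces correspond to $X\in\B$); this is indeed a standard Pontryagin-space fact (cf.\ \cite{AI89}), but it deserves that one-line justification. In exchange, your approach does not need the transitivity statement $N=UK$, so it is slightly more self-contained at the level of indefinite-metric theory, while the paper's conjugation argument is shorter and avoids all definiteness estimates.
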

\begin{proof}
By our assumptions, $\pi$ acts by $J$-unitary operators on a $\Pi_k$ space $H\oplus K$. Since $\Gamma_0 =
\pi(\Gamma)$ is a bounded group of $J$-unitary operators, the previous Corollary shows that there is an
invariant for $\Gamma_0$ dual pair $(M,N)$ of subspaces. Moreover $N = UK$ for some $J$-unitary operator $U$.
Then all operators $\tau(g) = U^{-1}\pi(g)U$ are $J$-unitary, and the subspace $K$ is invariant for them. It
follows that $H$ is also invariant for operators $\tau(g)$. Hence these operators preserve the scalar product on
$\cH$. Thus $g\mapsto \tau(g)$ is a unitary representation similar to $\pi$.
\end{proof}

{\bf Acknowledgement.} The authors are grateful to Professor S.
Reich for helpful suggestions and references.

\end{document}